\numberwithin{equation}{section}
\newcolumntype{Y}{>{\centering\arraybackslash}X}
\newtheorem{theorem}{Theorem}
\newtheorem{lemma}[theorem]{Lemma}
\newtheorem{corollary}[theorem]{Corollary}
\newtheorem{definition}[theorem]{Definition}
\theoremstyle{remark}
\numberwithin{theorem}{section}
\newcommand{\wal}{\operatorname{wal}}
\newcommand{\Wal}{\operatorname{Wal}}
\def\hatgap{2pt}
\def\subdown{-2pt}
\newcommand\reallywidehat[2][]{%
	\renewcommand\stackalignment{l}%
	\stackon[\hatgap]{#2}{%
		\stretchto{%
			\scalerel*[\widthof{$#2$}]{\kern-.6pt\bigwedge\kern-.6pt}%
			{\rule[-\textheight/2]{1ex}{\textheight}}
		}{0.5ex}
		_{\smash{\belowbaseline[\subdown]{\scriptscriptstyle#1}}}%
}}
\begin{document}

\title[]{On the Stable Sampling rate for binary measurements and wavelet reconstruction}

\author{A. C. Hansen} 
\address{DAMTP, University of Cambridge}
\email{ach70@cam.ac.uk}

\author{L. Thesing} 
\address{DAMTP, University of Cambridge}
\email{lt420@cam.ac.uk}

\begin{abstract}
This paper is concerned with the problem of reconstructing an infinite-dimensional signal from a limited number of linear measurements. In particular, we show that for binary measurements (modelled with Walsh functions and Hadamard matrices) and wavelet reconstruction the stable sampling rate is linear. This implies that binary measurements are as efficient as Fourier samples when using wavelets as the reconstruction space. Powerful techniques for reconstructions include generalized sampling and its compressed versions, as well as recent methods based on data assimilation. Common to these methods is that the reconstruction quality depends highly on the subspace angle between the sampling and the reconstruction space, which is dictated by the stable sampling rate. As a result of the theory provided in this paper, these methods can now easily use binary measurements and wavelet reconstruction bases.
\end{abstract}

\keywords{Sampling theory, Generalized sampling, Wavelets, Walsh functions, Stable Sampling rate, Data assimilation, Hilbert spaces}

\maketitle

\section{Introduction}

Reconstructing infinite-dimensional signals from a limited number of linear measurements is a key problem in sampling and approximation theory, and has received substantial attention over the last decades due to its many applications. The list of fields is comprehensive and includes Magnetic Resonance Imaging (MRI)  \cite{MRI, Unser_MRI}, electron tomography \cite{lawrence2012et,leary2013etcs}, lensless cameras, fluorescence microscopy \cite{Candes_PNAS, Roman}, X-ray computed tomography \cite{Stanford_CT, quinto2006xrayradon}, surface scattering \cite{Nature_sci_rep} among others. Efficient methods for such problems date back to Shannon's sampling theorem \cite{ShannonOverview, Shannon, Shannon50}, however, over the years, more modern approaches have been developed. Indeed, new methods include generalized sampling, which has been studied by Adcock, Hansen, Hrycak, Gr{\"o}chenig, Kutyniok, Ma, Poon, Shadrin and others \cite{2DCase, GS, sharpBounds, hrycakIPRM, Hansen_JAMS,AHPWavelet, Ma}, its compressed versions investigated by Adcock, Hansen, Kutyniok, Lim, Poon and Roman \cite{breaking, KutyniokLimShearletFourier, BAACHGSCS, PoonFrames} as well as the predecessor; consistent sampling, analysed by Aldroubi, Eldar, Unser and others \cite{SamplingTranslates, ConsistentSampling, RobustConsistentSampling, eldar2003FAA, RobustConsistentSampling3, RobustConsistentSampling4}. Note that consistent sampling is very much related to the finite section method \cite{FiniteSection1, FiniteSectionGroechnig, FiniteSection3, FiniteSection4}. More recently, new methods based on data assimilation have been successfully developed and analysed. A first approach for the same number of measurements $M$ and reconstructed coefficients $N$ was introduced under the name \textit{generalized empirical interpolation method} by Maday, Patera, Penn and Yano in \cite{maday2}. This was then further extended to \emph{Parametrized-Background Data-Weak (PBDW) approach} with $M \geq N$ in \cite{PBDW, maday3}. The PBDW approach was additionally analysed and shown to be optimal by Binev, Cohen, Dahmen, DeVore, Petrova, and Wojtaszczyk \cite{deVore1, deVore2, deVore3}.

The problem is given as follows. An element $f \in \mathcal{H}$, where $\mathcal{H}$ is a separable Hilbert space, is to be reconstructed from measurements with linear functionals  $(m_i)_{i \in \mathbb{N}} : \mathcal{H} \rightarrow \mathbb{C}$ that can be represented by elements $s_i \in \mathcal{H}$ as $m_i(f) = \langle f, s_i\rangle$. The key issue is that the $m_i$ cannot be chosen freely, but are dictated by the modality of the sampling device, for example a Magnetic Resonance Imaging (MRI) scanner providing Fourier samples or a fluorescence microscope giving binary measurements. The goal is to reconstruct $f$ from the finite number of samples $\{m_i(f)\}_{i=1}^M$ for some $M \in \mathbb{N}$. The space of the functions $s_i$ is called the \emph{sampling space} and is denoted by $\mathcal{S} = \overline{\operatorname{span}} \{ s_i : i \in \mathbb{N} \}$, meaning the closure of the span. In practice, one can only acquire a finite number of samples. Therefore, we denote by $\mathcal{S}_M = \operatorname{span} \{ s_i : i = 1, \ldots, M \}$ the sampling space of the first $M$ elements. 
The reconstruction is typically done via a reconstruction space denoted by $\mathcal{R}$ and spanned by reconstruction functions $(r_i)_{i \in \mathbb{N}}$, i.e. $\mathcal{R} = \overline{\operatorname{span}} \{ r_i : i \in \mathbb{N} \}$. As in the case of the sampling space, it is impossible to acquire and save an infinite number of reconstruction coefficients. Hence, one has to restrict to a finite reconstruction space, which is denoted by $\mathcal{R}_N = \operatorname{span} \{ r_i : i = 1 , \ldots , N \}$.
The key is that the $r_i$ can be tailored to the desired signal type to be reconstructed.  For example, spaces spanned by X-lets (wavelets, curvelets, contourlets, shearlets) \cite{Shearlets2, Shearlets3, OptimalShearlets, Curvelets, Curvelets3, Curvelets2, Contourlets2, Contourlets}
may be preferable as reconstruction spaces in imaging applications, whereas polynomials may be useful when recovering very smooth functions.

The methods mentioned above can be described as follows: for $f \in \mathcal{H}$ and $N,M \in \mathbb{N}$, we define the reconstruction method of \emph{generalized sampling} $G_{N,M} : \mathcal{H} \rightarrow \mathcal{R}_N$ by
	\begin{equation}\label{EqGeneralSamp}
	\langle P_{\mathcal{S}_M} G_{N,M}(f), r_j \rangle = \langle P_{\mathcal{S}_M}f, r_j \rangle, \quad r_j \in \mathcal{R}_N,
	\end{equation}
	where $P_{\mathcal{S}_M}$ denotes the orthogonal projection on the subspace $\mathcal{S}_M$.  Note that the stability and accuracy of this method depends on the subspace angle between the sampling and the reconstruction space, i.e.
	\begin{equation}
		\| f - G_{N,M}(f)\| \leq \mu(\mathcal{R}_N,\mathcal{S}_M) \| f - P_{\mathcal{R}_N} f\|,
	\end{equation}
	where we define the subspace angle between closed subspaces $U,V \in \mathcal{H}$ 
	\begin{equation}
\cos(\omega(U,V)) := \frac 1 {\mu(U,V)} := \inf\limits_{u\in U, \|u\| =1} \| P_V u\|, 
\end{equation}
$\omega(U,V) \in [0, \pi/2]$.
Moreover, the condition number $\kappa$ of $G_{N,M}$ is also given by $\kappa (G_{N,M}) = \mu(\mathcal{R}_N,\mathcal{S}_M)$.
For the PBDW method one calculates
	\begin{equation}
		F_{N,M}(f) = {\displaystyle {\underset {u \in P_{\mathcal{S}_M}f + \mathcal{S}_M^\perp}{\operatorname {argmin} }}}\,  \|u - P_{\mathcal{R}_N} u \|,
	\end{equation}
and it can be shown that the accuracy then depends on subspace angle as follows
	\begin{equation}
	\| f - F_{N,M}(f)\| \leq \mu(\mathcal{R}_N,\mathcal{S}_M) \operatorname{dist}(f,\mathcal{R}_N \oplus (\mathcal{S}_M \cap \mathcal{R}_N)^{\perp}).
	\end{equation}
	Moreover, this is sharp because the constant $\mu(\mathcal{R}_N,\mathcal{S}_M)$ cannot be improved.
It is clear that, in both approaches, the key to success lies in the ability to make sure that 
$$
\mu(\mathcal{R}_N,\mathcal{S}_M)  \leq \theta, \qquad \theta \in (1,\infty).
$$
Thus, we need to balance the number of samples $M$ with the number of reconstruction vectors $N$, and this leads to the so-called \emph{stable sampling rate}:
\begin{equation}\label{ss_rate}
\Theta(N, \theta) = \min \left\{ M \in \mathbb{N}: \mu(\mathcal{R}_N,\mathcal{S}_M)  \leq \theta \right\}.
\end{equation}
The methods above can only be used efficiently when the stable sampling rate is known and reasonable. In particular, numerical calculations of the stable sampling rate are very time consuming. Moreover, if the stable sampling rate is worse than linear, the approximation quality of the reconstruction space must allow for rapid approximation to compensate for the "slow" sampling rate. Fortunately, it is possible to obtain sharp results on describing $\Theta(N, \theta)$ for popular sampling and reconstruction spaces, and often, especially for the reconstruction with X-lets, one can establish linearity. In this paper we do so for sampling with Walsh functions and reconstructing with wavelets. Remark that this is not always the case. For example in the case of the reconstruction with Legendre polynomials, the stable sampling rate is known to be quadratic for Fourier measurements \cite{hrycakIPRM}. Additionally, for the PBDW-method there are approaches where the reconstruction space is fixed by the PDE and the sampling space is chosen adaptively. This is a very different setting from the one discussed here. Details and convergence rates can be found in \cite{Binev4}.

\subsection{Connection to previous work and novelty of the paper}
The stable sampling rate is well understood when the samples $m_i(f) = \langle f, s_i\rangle$ are Fourier measurements. In other words, the $s_i$ are complex exponentials and $m_i(f)$ are the Fourier coefficients. In this case the stable sampling rate is linear for many X-lets including wavelets and shearlets. Fourier samples and X-lets are a natural starting point given the vast applications that are based on Fourier measurements (MRI, tomography problems with parallel beam, surface scattering, radio interferometry etc.). However, the next question regards binary measurements. By binary measurements we mean that the sampling functions $s_i$ can only take two values either $\{0,1\}$ or $\{-1,1\}$. Without loss of generality we can assume that the model uses $\{-1,1\}$, as one can, by adding one extra measurement with the constant function, convert from the $\{0,1\}$ setup to the $\{-1,1\}$ model.

Binary measurements are a mainstay in signal and image processing due to the "on-off" nature of many physical sampling devices. Microscopy is an obvious application as well as the newly emerging techniques of lensless cameras. In the discrete setting binary measurements are often modelled with Hadamard matrices, and this is one of the reasons why Hadamard matrices are so important in signal processing. To model binary measurements we change the model from Fourier samples $\langle f, s_i\rangle$, where the $s_i$ are complex exponentials to letting the $s_i$ be Walsh functions. The Walsh functions are the binary counterpart to Fourier samples and complex exponentials. Thus, the key question is as follows: what is the stable sampling rate when sampling with Walsh functions and reconstructing with wavelets? The answer is that it is linear regardless of the dimension when we consider separable boundary wavelets. This means that sampling with binary measurements is as efficient (up to potentially a different constant) as sampling with Fourier samples when reconstructing with wavelets. We expect the techniques used in this paper to extend to other X-lets as well; however, the extension, as in the Fourier case, is non-trivial.

\subsection{Main Theorem}
We consider the sampling space $\mathcal{S}$ of Walsh functions, which will be described in more detail in Chapter \ref{Ch:Walsh}, and let the reconstruction space $\mathcal{R}$ be the space of boundary-corrected Daubechies wavelets (see Chapter \ref{Ch:Wavelet} for details). The main theorem states that the stable sampling rate is indeed linear in $N$.
\begin{theorem}\label{Theo41CH2}
		Let $\mathcal{S}$ and $\mathcal{R}$ be the sampling and reconstruction space spanned by the d-dimensional Walsh functions and separable boundary wavelets respectively. Moreover, let $N = 2^{dR}$ with $R \in \mathbb{N}$. Then for all $\theta \in (1, \infty)$ there exists $S_\theta$ such that for all $M \geq 2^{dR}S_\theta$ we have $\mu(\mathcal{R}_N,\mathcal{S}_M) \leq \theta$. In particular, one gets $\Theta \leq S_\theta N$. Hence, the relation $\Theta(N;\theta) = \mathcal{O}(N)$ holds for all $\theta \in (1,\infty)$.
	\end{theorem}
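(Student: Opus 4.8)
The plan is to reduce the $d$-dimensional statement to a one-dimensional estimate on the decay of the off-diagonal blocks of the change-of-basis operator between Walsh functions and wavelets, exploiting the tensor-product structure of both the separable boundary wavelets and the $d$-dimensional Walsh system. Concretely, I would index the Walsh sampling functions and the wavelet reconstruction functions by scale, so that $\mathcal{R}_N$ with $N=2^{dR}$ corresponds to keeping all (boundary-corrected Daubechies) wavelets up to scale $R$, and $\mathcal{S}_M$ with $M = 2^{dK}$ corresponds to keeping all Walsh functions up to ``Walsh-scale'' $K$. By the characterization of the subspace angle in terms of the norm of $P_{\mathcal{S}_M^\perp}$ restricted to $\mathcal{R}_N$, it suffices to show that for $K = R + \lceil \log_2 S_\theta \rceil$ (i.e. $M \asymp 2^{dR}$) we have $\|P_{\mathcal{S}_M^\perp} f\| \le \sqrt{1 - \theta^{-2}}\,\|f\|$ for every $f \in \mathcal{R}_N$; equivalently $\|(I - P_{\mathcal{S}_M})P_{\mathcal{R}_N}\| < 1$ with the right quantitative gap.

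The core of the argument is a one-dimensional block-decay estimate: writing the cross-Gram matrix entries $\langle \varphi_{R,n}, w_m\rangle$ (and the analogous entries for the wavelet functions $\psi_{j,n}$, $j < R$) between wavelets at scale $R$ and Walsh functions at scale $\ge K$, one shows these decay fast enough in $|K - R|$ that the total ``leaked'' energy $\sum_{m \ge 2^K} |\langle f, w_m\rangle|^2$ is at most $(1-\theta^{-2})\|f\|^2$ for all $f \in \mathcal{R}_N$, uniformly in $R$, once $K - R$ exceeds a threshold depending only on $\theta$ (and on the number of vanishing moments / support size of the Daubechies wavelet). This is the Walsh analogue of the classical Fourier-wavelet localization estimate; the decay here comes from the smoothness/vanishing-moment properties of Daubechies wavelets played against the dyadic block structure of Walsh functions, and I expect it to have been established (or to be establishable) using the material of Chapters \ref{Ch:Walsh} and \ref{Ch:Wavelet}. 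Given such an estimate, one chooses $S_\theta = 2^{d(K-R)}$ and the one-dimensional bound is immediate.

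To pass from $d=1$ to general $d$, I would use that a $d$-dimensional separable boundary wavelet is a tensor product $\psi_{j_1,n_1}\otimes\cdots\otimes\psi_{j_d,n_d}$ and that the $d$-dimensional Walsh functions factor as tensor products as well, so the cross-Gram operator for the $d$-dimensional problem is the $d$-fold tensor product of the one-dimensional cross-Gram operators. Consequently $P_{\mathcal{S}_M^\perp}$ restricted to $\mathcal{R}_N$ is controlled by a union-bound / telescoping argument over the $d$ coordinate directions: the $d$-dimensional leaked energy is bounded by $d$ times the worst one-dimensional leaked energy (after replacing $\theta$ by a slightly larger value, or $S_\theta$ by $S_\theta$ adjusted by a factor depending on $d$ and $\theta$). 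This yields $\mu(\mathcal{R}_N,\mathcal{S}_M) \le \theta$ whenever $M \ge 2^{dR} S_\theta$, which is exactly $\Theta(N;\theta) \le S_\theta N$, hence $\Theta(N;\theta) = \mathcal{O}(N)$.

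The main obstacle is the one-dimensional block-decay estimate, specifically obtaining decay that is \emph{uniform in the scale $R$}: Walsh functions, unlike complex exponentials, are not eigenfunctions of dilation in a way that makes the wavelet-Walsh interaction scale-invariant, so one must carefully track how the boundary-corrected wavelets at the edges of $[0,1]$ interact with low-index Walsh functions, and ensure the boundary terms do not spoil the uniform geometric decay. A secondary technical point is bookkeeping the non-dyadic relationship between ``how many Walsh functions'' and ``how many wavelets'' — i.e. making sure the indexing conventions line up so that $M = 2^{dK}$ really does dominate $2^{dR}S_\theta$ for the chosen $K$ — but this is routine once the decay estimate and the tensorization lemma are in place.
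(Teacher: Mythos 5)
Your high-level architecture does match the paper's: reduce $\mu(\mathcal{R}_N,\mathcal{S}_M)\leq\theta$ to a quantitative bound on $\|P_{\mathcal{S}_M}^{\perp}\varphi\|$ for unit-norm $\varphi\in\mathcal{R}_N=V_R^{b,d}$, establish a one-dimensional decay estimate, and tensorize (the paper also handles the complement of the index hyperrectangle $I_M$ by a coordinate-wise worst-case argument, essentially your union bound; and your observation that the wavelet spaces up to scale $R$ collapse into the scaling space $V_R^b$ is exactly how the paper avoids treating the $\psi_{j,n}$ separately). But there is a genuine gap: the entire technical content of the theorem is the one-dimensional estimate, and you only assert that it ``is establishable.'' Two specific mechanisms are missing. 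First, a term-wise decay bound on individual cross-Gram entries $\langle\phi_{R,n},\Wal(k,\cdot)\rangle$ does not yield a bound on $\sum_{k\geq M}|\langle f,\Wal(k,\cdot)\rangle|^2$ that is uniform over all $f=\sum_n\alpha_n\phi_{R,n}$ with $\sum_n|\alpha_n|^2=1$: summing the $2^R$ translates naively loses a factor of order $N$. The paper resolves this by factoring the inner product into (Walsh transform of a fixed piece of the mother scaling function) times (a Walsh polynomial $\Phi_i$ in the coefficients $\alpha_n$), see \eqref{EqTrick}, and then invoking a discrete Walsh--Plancherel identity (Lemma \ref{Lemma2DphiSum}) to conclude $\sum_{j}L^{-1}|\Phi_i(j/L)|^2=\sum_n|\alpha_n|^2\leq1$. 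Your proposal contains no substitute for this step.

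Second, that factorization requires the multiplicative identity $\Wal(s,x\oplus t)=\Wal(s,x)\Wal(s,t)$, which holds for \emph{dyadic} addition, whereas the wavelet translates $\phi(2^Rx-n)$ involve \emph{decimal} shifts. The paper must cut the scaling function into pieces supported on unit intervals, \eqref{phii}, precisely so that Lemma \ref{Lem:Addxm} makes the two additions coincide and Corollary \ref{Cor:scalingProp} applies. You correctly flag that Walsh functions do not interact with dilation and translation in a scale-invariant way, which is the right worry, but you offer no mechanism to overcome it. A smaller conceptual inaccuracy: the decay in Lemma \ref{Lemma:Decayrate} comes from H\"older continuity alone (exponent $\alpha=0.55$ for DB2, so the tail sum behaves like $S^{-(2\alpha-1)}$, not geometrically in the usual sense), not from vanishing moments, which do not pair with Walsh functions the way they do with complex exponentials. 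Finally, your choice $M=2^{dK}$ with $K=R+\lceil\log_2 S_\theta\rceil$ gives $|M|\asymp S_\theta^d N$ rather than $S_\theta N$; this is only a renaming of the constant, and indeed the paper's own $d$-dimensional proof concludes with $|M|=S_\theta^dN$, but it should be stated explicitly.
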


\section{Walsh functions - Defining the Sampling Space $\mathcal{S}_M$}\label{Ch:Walsh}

	Due to the fact that we are dealing with the $d$-dimensional case, we introduce multi-indices to make the notation more readable. Let $j = \left\{j_1, \ldots, j_d \right\} \in \mathbb{N}^d$, $d \in \mathbb{N}$ be a multi-index. A natural number $n$ is in the context of a multi-index interpreted as a multi-index with the same entry, i.e. $n = \left\{n, \ldots ,n \right\}$. Then we define the addition of two multi-indices for $j, r \in \mathbb{N}^d$ by the pointwise addition, i.e. $j + r = \left\{j_1 +r_1, \ldots, j_d +r_d \right\}$ and the sum 
\begin{equation}\label{sum}
\sum_{j=k}^{r} x_j := \sum_{j_1 = k_1}^{r_1} \ldots \sum_{j_d = k_d}^{r_d} x_{j_1, \ldots, j_d}, 
\end{equation}
where $k,r \in \mathbb{N}^d$. The multiplication of an multi-index with a real number is understood pointwise, as well as the division by a multi-index. The absolute value of a multi-index $j$ is given by $|j|= j_1 + \ldots +j_d$. The $d$ dimensional functions that we use in this paper are constructed by the tensor product. For a function $f: \mathbb{R} \rightarrow \mathbb{R}$ and an input parameter $\left\{x_i\right\}_{i=1,\ldots,d} = x \in \mathbb{R}^d$ with $x_i \in \mathbb{R}$ we use the following notation to present the $d$-times tensor product of $f$, i.e.
\begin{equation}
	f(x) = f(x_1) \otimes \ldots \otimes f(x_d) \text{ (d-times)}.
\end{equation}
It should be clear from the input parameter, whether $f$ represents the function on $\mathbb{R}$ or $\mathbb{R}^d$.

	\subsection{Defining Walsh functions}
The key property that makes Walsh functions attractive in many applications is that they take only the values $1$ and $-1$. 	However, as Walsh functions are defined in the dyadic analysis, some properties only hold for dyadic addition. Recalling the basics of dyadic addition,
	we represent elements $x \in \mathbb{R}_+$ with their dyadic representation as follows
	$$
		x = \sum\limits_{i \in \mathbb{Z}} x_i 2^i ,
	$$
	where $x_i \in \left\{0,1\right\}$ for all $i \in \mathbb{Z}$. The natural extension always ends in $0$ for dyadic rational numbers and is infinite for dyadic irrational numbers. The representation is therefore unique. Elements of $\mathbb{R}_{-}$ are represented as in the decimal analysis with an additional $-$ in front of the representation. In the dyadic analysis the addition $\oplus: \mathbb{R}_+ \mapsto \mathbb{R}_+$ is defined by
	\begin{equation}
	x \oplus y = \sum_{i \in \mathbb{Z}} (x_i \oplus_2 y_i) 2^i,
	\end{equation}
	where $x_i \oplus_2 y_i$ is addition modulo two, i.e. $0 \oplus_2 0  = 0, 0 \oplus_2 1 = 1, 1 \oplus_2 0 = 1, 1 \oplus_2 1 = 0$. For negative numbers one has $-x \oplus y = x \oplus -y = - (x \oplus y)$.
	Note that there is no closed form between the decimal and dyadic addition. In particular, for two numbers $x,h \in \mathbb{R}$ the expression of the decimal sum $x + h$ has a different expression in the dyadic addition for every pair of numbers. This leads to further investigation during the proof of the main theorem. One part on the way to control the subspace angle is not to deal with all timeshifts of the wavelet but instead transfer them to the Walsh function and deal with the Walsh polynomial. Unfortunately, all properties of Walsh functions rely on dyadic rather than decimal addition. Therefore, this difference of the additions needs special care. In chapter \ref{Ch:LemmaWalsh} we will see that under mild assumptions the additions can made be equal. In chapter \ref{Ch:WaveletChanges} we tweak the wavelets to match these assumptions.
	
	Now, we present the Walsh functions, which are used to represent the sampling space $\mathcal{S}_M$. Therefore, we give a definition of the classical Walsh functions that highlights the difference between the possible orderings.
\begin{definition}[\cite{deutschWalsh}]
		Let $s \in \mathbb{N}$ and $x \in [0,1)$. Then there exists a unique $n = n(s) \in \mathbb{N}$ such that $s = \sum_{i=0}^{n-1} s_i 2^i$, in particular $s_{n-1} \neq 0$ and $s_k = 0$ for all $k \geq n$. Let $s^n = \left\{s_0, \ldots, s_{n-1} \right\}$ and for $x = \sum_{i = - \infty}^{-1} x_i 2^{i}$ define $x^n = \left\{x_{-n}, \ldots, x_{-1} \right\}$, and $\omega_W$: $\mathbb{R}^n \mapsto \mathbb{R}^n$ by
		\begin{align}
		\omega_W = \begin{pmatrix}
		    0  & \cdots & 0 & 1 & 1\\
		  \vdots  & \udots & \udots & 1 & 0 \\
		  0  & \udots  & \udots & \udots & \vdots  \\
		  1 & 1 & \udots &  & \vdots \\
		  1 & 0 & \cdots & \cdots & 0 
				 \end{pmatrix}.
		\end{align}
The \emph{Walsh functions} are then given by
		\begin{equation}
		\wal(s;x) = (-1)^{s^n \cdot \omega_W x^n}.
		\end{equation}
	\end{definition}
	
By changing the matrix $\omega_W$ one gets different orderings of the Walsh functions. For example, the identity matrix leads to the Walsh-Kronecker functions, which have the drawback that with a change of $n(s)$ all functions are altered, hence one has to fix the maximal $s$ in advance. The Walsh-Paley ordering is obtained by replacing $\omega_W$ with the reversal matrix, i.e. 
	\begin{align}
\omega_{WP} = \begin{pmatrix}
0  & \cdots & 0 & 0 & 1\\
\vdots  & \udots & \udots & 1 & 0 \\
0  & \udots  & \udots & \udots & 0  \\
0 & 1 & \udots & \udots & \vdots \\
1 & 0 & 0 & \cdots & 0 
\end{pmatrix}.
\end{align}
They overcome the previous problem, but the functions are not ordered such that the number of zero crossings increases with $s$. Both drawbacks are overcome with the Walsh-Kaczmarz ordering presented in the previous definition.

	The classical Walsh functions can be extended to the \emph{generalized Walsh functions} $\Wal: \mathbb{R}^2_+ \rightarrow \left\{-1,1\right\}$ which are defined with the classical Walsh functions and the periodic continuation with period $1$ by
	\begin{equation}
	   \operatorname{Wal}(s,x) = (-1)^{s_0 x_0} \operatorname{wal}(\left[s\right]; x)\operatorname{wal}(\left[x\right]; s),
	\end{equation}
	where $s$ and $x$ have the dyadic representation $(s_i)_{i \in \mathbb{Z}} $ and $(x_i)_{i \in \mathbb{Z}}$ and $s_0$, $x_0$ are the corresponding elements of the sequence. This extension can also be defined by letting $\omega_W$ be infinite, i.e. be defined over $\mathbb{Z}$ instead of $\mathbb{N}$ and hence allow inputs with infinite dyadic representations over $\mathbb{Z}$.
	Moreover, the Walsh functions can also be extended to negative inputs. Therefore, we define  the following equality as in \cite{deutschWalsh}
	\begin{align}\label{negativeSequ}
	\Wal(-s,x) :=  - \Wal(s,x)\\
	\Wal(s,-x) := - \Wal(s,x).
	\end{align}
	The Walsh functions in higher dimensions are obtained by the tensor product, i.e.
	for $s = \left\{s_k\right\}_{k=1,\ldots,d}, x = \left\{x_k\right\}_{k=1,\ldots,d} \in \mathbb{R}^d$
	\begin{equation}
	\operatorname{Wal}(s,x) = \bigotimes_{k=1}^d  \operatorname{Wal}(s_k,x_k) .
	\end{equation}
	The Walsh functions can also be combined to \emph{Walsh polynomials} similar to trigonometric polynomials.
	\begin{definition}
		Let $A,B \in \mathbb{Z}^d$ such that $A_i \leq B_i, i =1, \ldots,d$ and $\alpha_{j_i} \in \mathbb{R}$. Then for $z \in \mathbb{R}^d_+$ we define the \emph{Walsh polynomial} of order $n = |B|$ by $\Phi(z) = \sum_{j=A}^{B} \alpha_{j} \Wal(j,z)$. The set of all Walsh polynomials up to degree $n$ is given by
		\begin{equation}
			WP_n = \left\{\sum_{j=A}^{B} \alpha_{j} \Wal(j,z),\alpha_{j_i} \in \mathbb{R},  A,B \in \mathbb{Z}^d , |B| \leq n \right\}.
		\end{equation}
	\end{definition}
	
	With the generalized Walsh functions one can define a continuous and discrete transform.
	To ensure that the following integral exists, let $f \in L^2([0,1]^d)$, the \emph{generalized Walsh transform} is given almost everywhere by
 \begin{equation}\label{GenerWalsTransEq}
	 	\reallywidehat[W]{f}(s) = \langle f(\cdot), \Wal(s,\cdot) \rangle = \int_{[0,1]^d} f(x)\Wal(s,x)dx, \quad s \in \mathbb{R}^d.
	 \end{equation}
	This is suitable for our setting, because we consider only the Walsh transform of functions that are supported in $[0,1]^d$. 
	In the discrete setting we have for $N = 2^n$, $n \in \mathbb{N}$ and $x = \left\{x_0,\ldots,x_{N-1}\right\} \in \mathbb{R}^N$ that the one dimensional \emph{discrete Walsh transform} of $x$ is given by $X = \left\{X_0,\ldots,X_{N-1}\right\}$ with
	\begin{equation}
	X_j = \frac 1 N \sum\limits_{k=0}^{N-1} x_k \Wal(j,\frac k N).
	\end{equation}
	This transform corresponds, as mentioned, to the multiplication with a Hadamard matrix. By the definition of $\Wal$ it corresponds to the Hadamard matrix in Walsh-Kaczmarz ordering. The addition here is again the dyadic addition. The discrete d-dimensional Walsh transformed of $x \in \mathbb{R}^{N_1 \times \ldots \times N_d}$ where $x_{k_i} \in \mathbb{R}$, $k=\left\{k_i\right\}_{i=1,\ldots,d}, k_i=0,\ldots,N_i-1 $ is given by $X = \left\{X_{j}\right\} \in \mathbb{R}^{N_1 \times \ldots \times N_d}$, where $X_{j_i} \in \mathbb{R}$, $j = \left\{j_i\right\}_{i=1,\ldots,d}, j_i=0,\ldots,N_i-1$, with
	\begin{equation}
	X_{j} = \frac 1 {\prod_{i=1}^{d} N_i} \sum\limits_{k=0}^{N-1} x_{k} \Wal(j,\frac k N).
	\end{equation}

	\subsection{Properties of Walsh functions}
	
	The Walsh functions obey the following properties: They are symmetric, 
	\begin{equation}
		\operatorname{Wal}(s,x) = \operatorname{Wal}(x,s) \text{ for all } s,x \in \mathbb{R},
	\end{equation}
	  and they obey the \emph{scaling property} as well as the \emph{multiplicative identity}, i.e 
	  \begin{equation}\label{Eq:scalingProperty}
	  \Wal(2^k s, x) = \Wal(s, 2 ^k x) \text{ for all } s,x \in \mathbb{R},~ k \in \mathbb{N}
	   \end{equation}
	   and 
	   \begin{equation}\label{Eq:MultiplicativeID}
	   	\Wal(s,x)\Wal(s,t) = \Wal(s,x \oplus t) \text{ for all } s,x \in \mathbb{R}.
	   \end{equation}
	   These properties can be directly transferred to the $d$-dimensional Walsh functions and the continuous Walsh transform, i.e. it holds, that the continuous Walsh transform is linear 
	   \begin{equation}
	   	\mathcal{W} \left\{ a f(t) + b g(t) \right\} = a \mathcal{W} \left\{ f(t) \right\} + b \mathcal{W} \left\{ g(t) \right\} \text{ for all } a, b \in \mathbb{R} \text{ and } f,g \in L^2 ([0,1]^d),
	   \end{equation}
	   and obeys the following \emph{shift} and \emph{scaling property}, i.e. 
	   \begin{equation}
	   	\mathcal{W}\left\{ f(t \oplus x) \right\}(s) = \mathcal{W} \left\{ f(t) \right\}(s) \Wal(x,s) \text{ for all } x \in \mathbb{R}^d \text{ and }f \in L^2 ([0,1]^d)
	   \end{equation}
	    and 
	    \begin{equation}
	    \mathcal{W}\left\{ f(2^m t) \right\} (s) = \frac 1 {2^m} \mathcal{W} \left\{ f(t) \right\} (\frac s {2^m}) \text{ for all } m \in \mathbb{N}^{d} \text{ and } f \in L^2 ([0,1]^d).
	    \end{equation}

\section{Wavelets - Defining the Reconstruction Space $\mathcal{R}_N$}\label{Ch:Wavelet}
	
	\subsection{Boundary Wavelets}
	
	\subsubsection{Boundary Wavelet space in one dimension}

		Daubechies boundary wavelets are deduced from general Daubechies wavelets. They have the advantage that they keep desirable properties, such as smoothness and vanishing moments, from their mother wavelet. In contrast, other approaches to find an orthonormal wavelet basis for $L^2([0,1])$, such as extension with zero, periodising or folding, loose smoothness. For the construction of the Daubechies boundary wavelets, as presented in \cite{boundaryWavelets}, one starts with the Daubechies scaling functions. First, we deal with the scaling functions on the positive line $[0,\infty)$. Remember that a Daubechies scaling function $\phi$ of order $p$ has the support $[-p+1,p]$. Then the functions $\phi_n (x) = \phi(x-n)$ have their support completely in $[0,\infty)$ for $n \geq p-1$. However, these functions do not even generate the polynomials on $[0,\infty)$, so they do not represent smooth functions well. Therefore, the following functions are added to circumvent this issue:
	\begin{equation}
	\widetilde{\phi}^{\text{left}}_n(x) = \sum\limits_{l = 0}^{2p-2}  \binom{l}{n} \phi ( x + l - p +1) .
	\end{equation}
	It is shown in \cite{boundaryWavelets} that these functions together with the inner ones, i.e. the translates of the scaling function, whose support is completely contained in the positive real line, span all polynomials with degree smaller or equal to $p-1$ on $[0,\infty)$. 
	The same construction can be done for the negative line $(-\infty,0]$ and then be shifted by $1$ to get to the desired interval. This means in detail that the scaling function on the right hand side can be deduced from those on the left side, i.e. the construction for the right hand side results from a shift of $1$ in the functions that do intersect with the right end of the interval and a reflection. We have that 
	\begin{equation}
	\widetilde{\phi}_n^{\text{right}}(x) = \widetilde{\phi}_{-1-n}^{\text{left}}(-x) .
	\end{equation}
	In the next step we bring both systems together on $[0,1]$. To make sure that each shift of the scaling function is either an inner a left or a right scaling function, we consider scaling functions at a level $j \geq  J_0$, where $2^{J_0} \geq 2p -1$. This way the support size of the scaling function at that scale is smaller than $1$. Therefore, the scaling function can intersect only with $0$ or $1$ and hence the correction is well defined.
	The functions are now all corrected on the boundaries and they span the desired space $L^2([0,1])$. To form an orthonormal basis, we simply apply a Gram-Schmidt procedure. The new functions, after the orthonormalisation, are denoted by $\phi_n^\text{left}, \phi_n^\text{right}$. The functions have staggered support, i.e. $\operatorname{supp} \phi_n^\text{left} = [0,p+n]$. Therefore, all $\phi$ have support length at most $2p-1$. Hence, the change to the boundary wavelet preserves the favourable property of a small support size.
	The dilated boundary scaling functions can be deduced from this construction, like the scaling functions for the real line.
	With this construction we obtain $2^j +2$ scaling functions at every scale $j$, but in many applications one prefers to have $2^j$ scaling functions. Therefore, we remove the two outermost interior scaling functions, i.e. those with the support closest to $0$ and $1$ but not intersecting with them.
	This results in the subspaces
	\begin{equation}
	V_j^b = \operatorname{span} \left\{ \phi_{j,n}^b : n=0, \ldots 2^j-1 \right\},
	\end{equation}
	where
	\begin{align} \label{EqBoundScaling}
	\phi_{j,n}^b(x) = \begin{cases}
	2^{j/2} \phi^{\text{left}}_n (2^j x)  &  n=0,\ldots p-1 \\
	2^{j/2} \phi_n (2^j x) & n = p,\ldots 2^j - p -1 \\
	2^{j/2} \phi_{2^j -n-1} ^{\text{right}} (2^j (x-1)) & n = 2^j -p, \ldots 2^j -1 .
	\end{cases}
	\end{align}
	In \cite{boundaryWavelets} it is proven that we can define the wavelet space at every scale $j$ similar to the case on the real line by
	\begin{equation}
	W_j^b = V_{j+1}^b \cap (V_j^b)^\perp .
	\end{equation}
	The original wavelet functions $\psi_{j,k}$ from the real line are in $W_j^b$ for $k = p, \ldots , 2^j - p -1$. Because of the dimension of the scaling space one knows that $\dim W_j^b = 2^j$. Hence, one has to add $2p$ wavelets. As they do not play an important role for the investigation of the main theorem we point the interested reader to \cite{boundaryWavelets} for detailed information.
	
	Now that the necessary information about wavelets is introduced, we discuss the reconstruction space. The data is usually sparsely represented in the wavelet scheme, i.e. they only have large coefficients up to a certain scale. Therefore, the reconstruction space contains only the wavelets up to some scale $R$. Moreover, the low frequency part can be represented by the scaling space at some level $J \geq J_0$. In theory the choice of the lowest level $J$ is free. Nevertheless, it is common to use $J= J_0$. This results in the following reconstruction space.
		For $R \in \mathbb{N}$, the space of wavelets up to a scaling of $R$ is given by
		\begin{equation}\label{Eq:R_N1d}
		\mathcal{R}_N = V_J^b \oplus W_J^b \oplus \ldots \oplus W_{R-1}^b = V_R^b.
		\end{equation}
		and has $N = 2^R$ elements.
	Due to the construction the "left" scaling functions are translates of the mother scaling functions and the "right" scaling functions are  reflected translated scaling functions, denoted by $\phi^\#$.
	Therefore,
	\begin{equation}\label{Eq:ScalingSpace1dim}
	V_R^b = \operatorname{span}\left\{ \phi_{R,n} : n = 0,\ldots,2^R -p -1 , \phi_{R,n}^\# : n = 2^R - p,\ldots,2^R-1  \right\}
	\end{equation}
	and every $\varphi \in \mathcal{R}_N$ with $|| \varphi || = 1$ has the representation
	\begin{equation}\label{Eq:repvarphi1d}
	\varphi = \sum\limits_{n=0}^{2^R-p-1} \alpha_k \phi_{R,n} + \sum\limits_{n = 2^R -p}^{2^R-1} \beta_k \phi_{R,n}^\# \text{ with } \sum\limits_{n=0}^{2^R-p-1} |\alpha_n|^2 + \sum\limits_{n = 2^R -p}^{2^R-1} |\beta_n|^2= 1 .
	\end{equation}
	
	\subsubsection{Boundary wavelets in higher dimensions}
	
	In this paper, we also consider the $d$-dimensional case. For the reconstruction in $d$-dimensions we focus on separable boundary wavelets. Therefore, the $d$-dimensional wavelets can be derived from the one dimensional case by tensoring the scaling space and then studying the according wavelet space. 
	
	From \eqref{EqBoundScaling} we have the one dimensional boundary scaling function. With the tensor product we get the $d$ dimensional one, i.e. $\phi_{j,n}^d = \phi_{j,n_1}^b \otimes \ldots \otimes \phi_{j,n_d}^b$ for $n = \left\{n_1, \ldots, n_d \right\} \in \mathbb{N}^d$, $j \geq J_0$. To make this easier to read we set $\phi_{j,n} : = \phi_{j,n}^d$, as the dimension is defined by the context. Then the $d$-dimensional scaling space is given by
	\begin{equation}
		V_J^{b,d} := V_J^b \otimes \ldots \otimes V_J^b \text{ (} d \text{ times)}.
	\end{equation}
	For the purpose of constructing higher dimensional boundary wavelets we exploit the MRA structure. We have that 
	\begin{equation}
	V_j^b = V_{j-1}^b \oplus W_{j-1}^b .
	\end{equation} 
	Therefore, we can divide in higher dimensions the scaling space at one level in the scaling space and the wavelet space in the lower level 
	\begin{equation}
	V_j^{b,d} = V_j^b \otimes \ldots \otimes V_j^b = (V_{j-1}^b \oplus W_{j-1}^b) \otimes \ldots \otimes (V_{j-1}^b \oplus W_{j-1}^b) = V_{j-1}^{b,d} \oplus W_{j-1}^{b,d}.
	\end{equation}
	This way we have defined the $d$ dimensional boundary-corrected wavelet space $W_{j-1}^{b,d}$ by
	\begin{equation}
		W_{j-1}^{b,d} := (V_{j-1}^b \oplus W_{j-1}^b) \otimes \ldots \otimes (V_{j-1}^b \oplus W_{j-1}^b) \ominus V_{j-1}^{b,d}.
	\end{equation}
	Due to \eqref{Eq:R_N1d} we only have to focus on the scaling space, as the sum over the wavelet spaces can be represented by the scaling space at highest scale. Therefore, we do not explain details about the wavelets here. We have with \eqref{Eq:ScalingSpace1dim}
	\begin{equation}
	V_j^{b,d} = V_j^b \otimes \ldots \otimes V_j^b = \operatorname{span} \left\{ \phi_{j,n}^d := \phi_{j,n_1}^b \otimes \ldots \otimes \phi_{j,n_d}^b : n=\left\{n_1, \ldots, n_d\right\}, n_i = 0, \ldots 2^j -1 \right\}.
	\end{equation}
	According to the size of the one dimensional scaling space, we know that the $d$-dimensional scaling space has size $2^{dj}$. The reconstruction space for $N = 2^{dR}$ is then 
	\begin{equation}
	\mathcal{R}_N = V_R^{b,d}.
	\end{equation}
	In order to get the scaling space for the boundary wavelets in one dimension we had to reflect the scaling function for translates $k = 2^j -p, \ldots, 2^j-1$. This means that $V_{j}^{b,d}$ is spanned by the translates of $2^d$ functions, which are constructed by tensoring of the original scaling function $\phi^0 := \phi$ and the translated version $\phi^1 := \phi^\#$. 
	Define $K_0 = \left\{ 0, \ldots, 2^j-p-1 \right\}$ and $K_1 = \left\{ 2^j -p ,\ldots, 2^j -1 \right\}$, then the mapping $m : \left\{0, \ldots, 2^j -1\right\} \mapsto \left\{0,1\right\}$ is given by
	\begin{align} 
	m(n) = \begin{cases}
	0   &  n \in K_0 \\
	1   & n \in K_1 .
	\end{cases}
	\end{align}
	This allows us to represent $\varphi \in V_j^{b,d}$ with $||\varphi||=1$ by 
	\begin{equation}\label{Eq:R_Ndd}
	\varphi = \sum\limits_{n=0}^{2^j-1} \alpha_n \phi_{j,n_1}^{m(n_1)} \otimes \ldots \otimes \phi_{j,n_d}^{m(n_d)} = \sum\limits_{s \in \left\{ 0,1 \right\}^d} \sum\limits_{n \in K_s} \alpha_n \bigotimes \phi_{j,n}^s \text{ with } \sum\limits_{s \in \left\{ 0,1 \right\}^d} \sum\limits_{n \in K_s} | \alpha_n |^2 = 1,
	\end{equation}
	where
	\begin{equation}
	\bigotimes \phi_{j,n}^s = \phi_{j,n_1}^{s_1} \otimes \ldots \otimes \phi_{j,n_d}^{s_d}.
	\end{equation}
	
	So at this point we have the wavelets and the scaling functions which span the space $L^2([0,1]^d)$ and therefore the reconstruction from Walsh functions in the wavelet space is guaranteed.

	\section{The main theorems and its proof}

	Taken together, we can now prove the main result.

\subsection{Useful lemmas about Walsh functions}\label{Ch:LemmaWalsh}
	
	For the proof of theorem \ref{Theo41CH2} we have to combine the properties of Walsh functions in the dyadic analysis with the properties of the wavelets in the decimal analysis.
	Therefore, we consider under which conditions the decimal and dyadic additions are equal. This is important to combine the multiplicative identity of the Walsh functions with the translates of the wavelets.
	\begin{lemma}\label{Lem:Addxm}
		Let $x \in [0,1)$ and $m \in \mathbb{N}$ then 
		$
		x + m = x \oplus m.
		$
	\end{lemma}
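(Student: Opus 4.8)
The plan is to exploit the fact that the dyadic digits of $x$ and of $m$ occupy disjoint ranges of indices, so that the bitwise addition defining $\oplus$ never produces a carry and therefore coincides, digit by digit, with ordinary addition.

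First I would write out the dyadic representations explicitly. Since $x \in [0,1)$, its canonical dyadic expansion is $x = \sum_{i \leq -1} x_i 2^i$, that is, $x_i = 0$ for all $i \geq 0$; by the convention fixed earlier (the expansion terminates in zeros when $x$ is a dyadic rational) this is the unique representation entering the definition of $\oplus$. Since $m \in \mathbb{N}$, we may write $m = \sum_{i=0}^{n-1} m_i 2^i$ for a suitable $n = n(m)$, so $m_i = 0$ for all $i \leq -1$. Consequently, for every index $i \in \mathbb{Z}$ at most one of $x_i, m_i$ is nonzero. Now apply the definition of dyadic addition: $x \oplus m = \sum_{i \in \mathbb{Z}} (x_i \oplus_2 m_i)\, 2^i$. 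For $i \leq -1$ we have $m_i = 0$, hence $x_i \oplus_2 m_i = x_i$; for $i \geq 0$ we have $x_i = 0$, hence $x_i \oplus_2 m_i = m_i$. Therefore $x \oplus m = \sum_{i \leq -1} x_i 2^i + \sum_{i \geq 0} m_i 2^i = x + m$, which is exactly the assertion.

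There is essentially no obstacle here beyond bookkeeping; the only point that deserves a word of care is the appeal to uniqueness of the dyadic representation recalled earlier, which is what guarantees that the digit sequences $(x_i)_{i}$ and $(m_i)_i$ feeding into the definition of $\oplus$ are precisely the ones with the stated disjoint supports. Equivalently, one can argue on the decimal side: since $0 \leq x < 1$ one has $\lfloor x+m \rfloor = m$ and $\{x+m\} = x$, so the binary digits of $x+m$ are exactly the $m_i$ at non-negative positions and the $x_i$ at negative positions, matching the $\oplus$-sum term by term. I would present the first (bitwise) version as the main line of argument, since it generalises transparently to the later uses where $x$ is a dyadic rational with a prescribed number of digits.
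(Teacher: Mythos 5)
Your argument is exactly the paper's: both note that the dyadic digits of $x$ live at negative indices and those of $m$ at non-negative indices, so no position has two non-zero digits, whence $x_i \oplus_2 m_i = x_i + m_i$ termwise and the dyadic sum collapses to the ordinary one. The proposal is correct and takes essentially the same route, with only slightly more explicit bookkeeping about uniqueness of the representation.
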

	
	\begin{proof}
		The dyadic representation of $x$ is $\left\{ \ldots,0,x_{-1},x_{-2},\ldots \right\}$ and the dyadic representation of 
\[
m \text{ is } \left\{ \ldots,m_2,m_1,m_0,0,0,\ldots\right\}.
\]
Because the representations do not have non-zero elements at the same position, one achieves the following
		\begin{equation}
		x \oplus m = \sum\limits_{i= -\infty}^\infty (x_i \oplus_2 m_i) 2^i = \sum\limits_{i= -\infty}^\infty (x_i + m_i) 2^i = x + m.
		\end{equation}
	\end{proof}

Next, we look at the inverse element for the dyadic addition. This is also discussed in \cite{WalshSeriesAndTransform} and will be used in Corollary \ref{Cor:scalingProp}.

		\begin{lemma}\label{LemmaDyadicAdd}
		The dyadic sum of two numbers $x,y \in \mathbb{R}_+$ is $0$ if and only if $x = y$.
	\end{lemma}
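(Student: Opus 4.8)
The plan is to reduce everything to a digit-by-digit comparison of the canonical dyadic representations, using the definition of $\oplus$ together with the uniqueness of those representations recorded earlier in the text. First I would write $x = \sum_{i \in \mathbb{Z}} x_i 2^i$ and $y = \sum_{i \in \mathbb{Z}} y_i 2^i$ with $x_i, y_i \in \{0,1\}$, noting that only finitely many digits with $i > 0$ are nonzero, so that
\[
x \oplus y = \sum_{i \in \mathbb{Z}} (x_i \oplus_2 y_i)\, 2^i
\]
is a convergent series with non-negative terms.

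The direction $x = y \Rightarrow x \oplus y = 0$ is then immediate: uniqueness of the dyadic representation gives $x_i = y_i$ for all $i$, hence $x_i \oplus_2 y_i = 0$ for all $i$ (since $0 \oplus_2 0 = 1 \oplus_2 1 = 0$), so the series above vanishes. For the converse I would set $c_i := x_i \oplus_2 y_i$ and observe that $\sum_{i \in \mathbb{Z}} c_i 2^i = 0$ with all $c_i 2^i \geq 0$ forces every $c_i = 0$: if some $c_j = 1$, then the sum is at least $2^j > 0$, a contradiction. Since $\oplus_2$ vanishes on $\{0,1\}$ only when the two bits coincide, this yields $x_i = y_i$ for all $i$, and therefore $x = y$.

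The only place where one must be slightly careful is the non-uniqueness of dyadic expansions at dyadic rationals (such as $0.0111\ldots = 0.1000\ldots$); this is exactly why the canonical choice of representation (terminating in zeros) is fixed from the outset, which makes the digit map injective and legitimizes passing from ``$x_i = y_i$ for all $i$'' back to ``$x = y$''. I do not anticipate any other obstacle: no appeal to the relationship between decimal and dyadic addition is needed here, and the non-negativity of the terms $c_i 2^i$ carries the forward implication with no further computation.
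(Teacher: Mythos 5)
Your proposal is correct and follows essentially the same route as the paper: reduce $x \oplus y = 0$ to the statement that every digit $x_i \oplus_2 y_i$ vanishes, and then use that $\oplus_2$ vanishes only on equal bits together with the uniqueness of the canonical dyadic representation. You merely spell out two steps the paper leaves implicit (why a vanishing sum of non-negative terms forces each term to vanish, and why the canonical choice of expansion at dyadic rationals makes the digit map injective), which is fine but not a different argument.
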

	
	\begin{proof}
		Let $x,y \in \mathbb{R}_+$ with the dyadic representation $\left\{x_i\right\}_{i \in \mathbb{Z}}$ and $\left\{y_i\right\}_{i \in \mathbb{Z}}$. Then
		\begin{equation}
		x \oplus y = \sum\limits_{i= -\infty}^\infty (x_i \oplus_2 y_i) 2^i =0
		\end{equation}
		if and only if $x_i \oplus_2 y_i = 0$ for all $i \in \mathbb{Z}$. This is the case if and only if $x_i = y_i$ for all $i \in \mathbb{Z}$, i.e. $x = y$.
	\end{proof}

	With this the relation between the decimal addition and the multiplicative identity of the Walsh functions can be found.
	
	\begin{corollary}\label{Cor:scalingProp}
		Let $t \in \mathbb{N}$ and $x \in [0,1)$, then the following holds:
		\begin{equation}
		\mathcal{W}\left\{f(x+ t) \right\} (s) = \mathcal{W}\left\{ f(x) \right\} (s) \Wal(t,s).
		\end{equation}
	\end{corollary}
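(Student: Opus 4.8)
The plan is to turn the ordinary (decimal) translation $x\mapsto x+t$ into the dyadic translation $x\mapsto x\oplus t$ and then to quote the shift property of the generalized Walsh transform recorded at the end of Section~\ref{Ch:Walsh}. The first step is to apply Lemma~\ref{Lem:Addxm}: since $x\in[0,1)$ has a dyadic expansion supported on the strictly negative indices while $t\in\mathbb{N}$ has one supported on the non-negative indices, the two expansions never carry a $1$ in the same position, so $x+t=x\oplus t$. In the $d$-dimensional reading (with $t$ understood as the multi-index $\{t,\dots,t\}$ and $x\in[0,1)^d$) the identity holds in each coordinate, hence $x+t=x\oplus t$ in $\mathbb{R}^d_+$. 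Consequently $f(x+t)=f(x\oplus t)$, and therefore $\mathcal{W}\{f(x+t)\}(s)=\mathcal{W}\{f(x\oplus t)\}(s)$.

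The second step is to invoke the shift property $\mathcal{W}\{f(\,\cdot\,\oplus t)\}(s)=\mathcal{W}\{f(\,\cdot\,)\}(s)\,\Wal(t,s)$ (the ``shift property'' of Section~\ref{Ch:Walsh}, read with $x$ as the integration variable and $t$ as the fixed dyadic shift), which gives $\mathcal{W}\{f(x\oplus t)\}(s)=\mathcal{W}\{f(x)\}(s)\,\Wal(t,s)$. Chaining the two identities yields $\mathcal{W}\{f(x+t)\}(s)=\mathcal{W}\{f(x)\}(s)\,\Wal(t,s)$, which is the assertion; if one prefers the factor written the other way around, one may also use the symmetry $\Wal(t,s)=\Wal(s,t)$.

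The only point needing care is the bookkeeping around the integration domain: $x+t$ lies in $[t,t+1)^d$ rather than in $[0,1)^d$, so one must make sure that $f(x+t)$, the domain over which $\mathcal{W}$ integrates, and the range of validity of the shift property all refer to the same convention (the extension of $f$ off $[0,1]^d$, respectively the period-one continuation built into the generalized Walsh functions). Once the conventions of Section~\ref{Ch:Walsh} are fixed this is routine, and the real content of the corollary is precisely the equality $x+t=x\oplus t$ furnished by Lemma~\ref{Lem:Addxm}: it is what will later allow the multiplicative identity~\eqref{Eq:MultiplicativeID} to be applied to decimal translates of the wavelets.
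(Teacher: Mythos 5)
Your proposal is correct and follows exactly the paper's own argument: Lemma \ref{Lem:Addxm} converts the decimal translate $x+t$ into the dyadic translate $x\oplus t$, after which the shift property of the generalized Walsh transform gives the factor $\Wal(t,s)$. Your additional remarks on the integration domain and the periodic conventions are sensible bookkeeping but do not change the route.
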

	
	\begin{proof}
		With Lemma \ref{Lem:Addxm} we have that $x \oplus t = x +t$. This allows 
		\begin{equation}
		\mathcal{W}\left\{f(x+ t) \right\} (s) = \mathcal{W}\left\{f(x\oplus t) \right\} (s) = \mathcal{W}\left\{ f(x) \right\} (s) \Wal(t,s).
		\end{equation}
	\end{proof}

Next, we analyse the sum of Walsh functions with equally distributed inputs. This will be used in Lemma \ref{PlancheralWalshLemma}.

\begin{lemma}\label{LemmaAdditWalsh}
	Let $N = 2^n$, $n \in \mathbb{N}$, then for all $s \in \mathbb{N}_0$ the following property holds:
	\begin{equation}\label{EqWalAddition}
	\sum\limits_{i=0}^{N-1} \wal(s,\frac i N) = \begin{cases}
	N& \text{ if } s =0 \\
	0& \text{ else.}
	\end{cases}
	\end{equation} 
\end{lemma}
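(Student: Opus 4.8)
The plan is to recognise the left-hand side of \eqref{EqWalAddition} as the sum of a character over a finite abelian group, equivalently as the inner product of two rows of the Walsh--Hadamard matrix. The case $s=0$ is immediate, since $\wal(0,x)=1$ for all $x$ and the sum is $N$.

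For $s\neq 0$, I would first note that $G=\{\,k/N : k=0,\dots,N-1\,\}$ is closed under dyadic addition $\oplus$: each $k/N$ has a dyadic expansion supported in the positions $-1,\dots,-n$, and $\oplus$ acts bitwise, so $(G,\oplus)$ is a finite abelian group of order $N$ (isomorphic to $(\mathbb Z/2\mathbb Z)^n$). Since $\Wal(s,\cdot)$ coincides with $\wal(s,\cdot)$ on $[0,1)$ for every $s\in\mathbb N_0$, the multiplicative identity \eqref{Eq:MultiplicativeID} says exactly that $\chi_s\colon G\to\{-1,1\}$, $\chi_s(g)=\wal(s,g)$, is a group homomorphism, that is, a character of $G$. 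Hence the sum equals $\sum_{g\in G}\chi_s(g)$, which by the orthogonality relations for characters of a finite abelian group is $|G|=N$ when $\chi_s$ is trivial and $0$ otherwise. Equivalently, the $N\times N$ matrix $\bigl(\wal(s,k/N)\bigr)_{0\le s,k\le N-1}$ is the Walsh--Hadamard matrix in Walsh--Kaczmarz ordering attached to the discrete Walsh transform, so its rows are pairwise orthogonal; as the $s=0$ row is the all-ones vector, $\sum_k\wal(s,k/N)$ is just the inner product of row $s$ with row $0$.

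It then remains to verify that $\chi_s$ is nontrivial whenever $s\neq 0$. For $1\le s\le N-1$ this follows from the orthogonality of the distinct rows of the Hadamard matrix; alternatively, writing $s=\sum_k s_k2^k$ in binary and using $\wal(a,x)\wal(b,x)=\wal(a\oplus b,x)$ (a consequence of \eqref{Eq:MultiplicativeID} together with the symmetry $\Wal(s,x)=\Wal(x,s)$), one evaluates $\chi_s$ at the dyadic point $x=2^{-(k_0+1)}\in G$, where $k_0$ is the least index with $s_{k_0}=1$, and a short computation from the definition of $\wal$ gives $\chi_s(x)=-1$. For $s\ge N$ one invokes the scaling property \eqref{Eq:scalingProperty}, which on the grid $G$ makes $\wal(s,\cdot)$ depend only on the $n$ lowest binary digits of $s$, reducing to the range $0\le s<N$ just treated.

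I expect the only real obstacle to be the bookkeeping in that last reduction: dyadic addition has no closed form in terms of ordinary addition, so one has to check carefully that sampling a Walsh function on the equispaced grid $G$ annihilates all high-order binary digits of the frequency $s$. Once $\chi_s$ has been identified as a character of $(G,\oplus)$, the statement is the familiar character-orthogonality identity.
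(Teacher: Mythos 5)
Your proof takes a genuinely different route from the paper's. The paper handles $s\neq 0$ by citing the equidistribution property from \cite{deutschWalsh} -- for $s\le 2^m$ the function $\wal(s;\cdot)$ takes each of the values $\pm1$ on $2^{m-1}$ dyadic intervals of length $2^{-m}$, and the uniform grid $\{i/N\}$ meets the $+1$-intervals and the $-1$-intervals in equally many points. You instead identify $G=\{k/N\}$ with $(\mathbb{Z}/2\mathbb{Z})^n$ under $\oplus$, use the multiplicative identity \eqref{Eq:MultiplicativeID} to recognise $\chi_s=\wal(s,\cdot)|_G$ as a character, and invoke character orthogonality. For $1\le s\le N-1$ this is complete and, in fact, more self-contained than the paper's argument, provided you drop the first (circular) justification of nontriviality -- orthogonality of the rows of the Hadamard matrix is exactly the statement being proved -- and keep the direct evaluation at $x=2^{-(k_0+1)}$, which does work for the Kaczmarz ordering used here.

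The gap is in the case $s\ge N$, and your own reduction exposes it. You correctly note that on the grid $G$ the restriction $\chi_s$ coincides with $\chi_{s'}$ for some $s'\in\{0,\dots,N-1\}$, but nothing forces $s'\neq 0$: the map $s\mapsto\chi_s$ is a homomorphism from the infinite group $(\mathbb{N}_0,\oplus)$ onto the $N$-element character group of $G$ (all $N$ characters are already realised by $s<N$), so its kernel is infinite and contains nonzero elements. For every such $s$ the left-hand side of \eqref{EqWalAddition} equals $N$, not $0$; which $s\ge N$ these are depends on the ordering matrix $\omega_W$, but they exist for any ordering. So the lemma as stated, ``for all $s\in\mathbb{N}_0$,'' is false, and the last step of your argument asserts a reduction that, followed through, disproves rather than proves the claim for those $s$. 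To be fair, the paper's proof has the same blind spot (for $s\ge N$ the intervals of constancy are shorter than the grid spacing and the equidistribution argument collapses), and only $s\in\{0,\dots,N-1\}$ is ever used downstream in Lemma \ref{PlancheralWalshLemma}. With the statement restricted to $0\le s\le N-1$, your character-theoretic proof is correct.
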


\begin{proof}
	The first case for $s = 0$ follows directly by the definition of the Walsh function as $\wal(0;x) \equiv 1$ for all $x \in [0,1)$. For the second part we use the equal distribution of Walsh functions in intervals where it takes the values $-1$ and $1$, i.e. for $s \leq 2^m$, $m \in \mathbb{N}_0$ the Walsh function $\wal(s;x)$ takes the value $1$ on $2^{m-1}$ intervals of length $\frac 1 {2^m}$ and $-1$ on the same number of intervals of that length \cite{deutschWalsh}. As the sequence $\left\{ i/N \right\}_{i =0, \ldots N-1}$ is equally distributed on this interval, the sum equals $0$. 
\end{proof}

With this information in hand we can now prove the following lemma, which shows a relation between the values of the discrete Walsh transform and the signal itself. This will then be used in Lemma \ref{Lemma2DphiSum}. 

\begin{lemma}\label{PlancheralWalshLemma}
Let $N = \left\{N_i\right\}_{i=1,\ldots,d}$, where $N_i = 2^{n_i}, n_i \in \mathbb{N}$ and $i = 1,\ldots,d$. Let $x \in \mathbb{R}^{N_1 \times \ldots \times N_d}$, where $x = \left\{x_{k}\right\}$ and $x_{k_i} \in \mathbb{R}$, $k=\left\{k_i\right\}_{i=1,\ldots,d}, k_i=0,\ldots,N_i-1 $ be a discrete d-dimensional signal. Given the discrete d-dimensional Walsh transformed by $X = \left\{X_{j}\right\} \in \mathbb{R}^{N_1 \times \ldots \times N_d}$, where $X_{j_i} \in \mathbb{R}$, $j = \left\{j_i\right\}_{i=1,\ldots,d}, j_i=0,\ldots,N_i-1$,  with
\begin{equation}
X_{j} = \frac 1 {\prod_{i=1}^{d} N_i} \sum\limits_{k=0}^{N-1} x_{k} \Wal(j,\frac k N),
\end{equation}
it follows that
\begin{equation}
\sum\limits_{j=0}^{N-1} |X_{j}|^2 = \frac 1 {\prod_{i=1}^{d} N_i} \sum\limits_{k=0}^{N-1} |x_{k}|^2.
\end{equation}
\end{lemma}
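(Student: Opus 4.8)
The plan is to recognize this as a Parseval/Plancherel identity that follows from the orthogonality of the Walsh--Hadamard system, and to derive that orthogonality from Lemma~\ref{LemmaAdditWalsh} together with the multiplicative identity \eqref{Eq:MultiplicativeID}. Write $P = \prod_{i=1}^d N_i$. First I would expand the left-hand side: since all quantities are real,
\begin{equation}
\sum_{j=0}^{N-1} |X_{j}|^2 = \frac{1}{P^2} \sum_{j=0}^{N-1} \sum_{k=0}^{N-1} \sum_{l=0}^{N-1} x_{k} x_{l}\, \Wal\!\left(j,\tfrac{k}{N}\right)\Wal\!\left(j,\tfrac{l}{N}\right).
\end{equation}
Applying the multiplicative identity \eqref{Eq:MultiplicativeID} in each coordinate (it transfers to the tensorized $d$-dimensional Walsh functions) gives $\Wal(j,\tfrac{k}{N})\Wal(j,\tfrac{l}{N}) = \Wal(j, \tfrac{k}{N}\oplus\tfrac{l}{N})$, where $\oplus$ acts coordinatewise. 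I would then interchange the order of summation so that the sum over $j$ is innermost, reducing the problem to evaluating $\sum_{j=0}^{N-1}\Wal(j,z)$ for $z = \tfrac{k}{N}\oplus\tfrac{l}{N} \in [0,1)^d$.

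The second step is to show $\sum_{j=0}^{N-1}\Wal(j,z) = P$ if $z = 0$ and $0$ otherwise. Because $\Wal$ is defined as a tensor product, this sum factorizes as $\prod_{i=1}^d \sum_{j_i=0}^{N_i-1}\Wal(j_i, z_i)$, so it suffices to treat the one-dimensional case. Here $z_i$ is a dyadic rational of the form $m_i/N_i$ with $m_i \in \{0,\ldots,N_i-1\}$, since $k_i/N_i$ and $l_i/N_i$ have dyadic expansions supported in positions $-n_i,\ldots,-1$ and hence so does their dyadic sum. Using the scaling property \eqref{Eq:scalingProperty} and the symmetry $\Wal(s,x)=\Wal(x,s)$ I would rewrite $\Wal(j_i, m_i/N_i) = \Wal(m_i/N_i, j_i) = \Wal(m_i, j_i/N_i) = \wal(m_i, j_i/N_i)$, the last equality because $m_i$ is a natural number with $m_i < N_i$ and $j_i/N_i \in [0,1)$. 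Lemma~\ref{LemmaAdditWalsh} then yields $\sum_{j_i=0}^{N_i-1}\wal(m_i, j_i/N_i) = N_i$ if $m_i = 0$ and $0$ otherwise; taking the product over $i$ gives the claim, with $z = 0$ precisely when $m_i = 0$ for all $i$.

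Finally, by Lemma~\ref{LemmaDyadicAdd} applied coordinatewise, $z = \tfrac{k}{N}\oplus\tfrac{l}{N} = 0$ exactly when $\tfrac{k}{N} = \tfrac{l}{N}$, i.e. $k = l$. Hence only the diagonal terms survive in the triple sum, leaving
\begin{equation}
\sum_{j=0}^{N-1} |X_{j}|^2 = \frac{1}{P^2}\sum_{k=0}^{N-1} |x_{k}|^2 \cdot P = \frac{1}{P}\sum_{k=0}^{N-1}|x_{k}|^2,
\end{equation}
which is the desired identity. I expect the only real obstacle to be bookkeeping: carefully checking that the multiplicative identity and Lemma~\ref{LemmaAdditWalsh}, both stated for one-dimensional Walsh functions, transfer to the tensorized setting and to the specific dyadic-rational arguments $k/N$, and that $\Wal(j,\cdot)$ for integer $j$ coincides with the classical $\wal(j,\cdot)$ so that Lemma~\ref{LemmaAdditWalsh} is applicable; there is no deeper difficulty.
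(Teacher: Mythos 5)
Your proposal is correct and follows essentially the same route as the paper: expand $|X_j|^2$ into a double sum, use the multiplicative identity to combine the two Walsh factors, interchange summation, and invoke Lemma~\ref{LemmaAdditWalsh} together with Lemma~\ref{LemmaDyadicAdd} to see that only the diagonal $k=l$ survives. You in fact spell out a detail the paper leaves implicit, namely the symmetry/scaling manipulation needed to put the inner sum into the exact form $\sum_{j_i}\wal(m_i, j_i/N_i)$ required by Lemma~\ref{LemmaAdditWalsh}.
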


\begin{proof}
First, one observes that by definition and the fact that $N_i=2^{n_i}, i = 1,\ldots,d$ the following holds:
\begin{align}
|X_{j}|^2 & = \frac 1 {\prod_{i=1}^d N_i^2} \left( \sum\limits_{k=0}^{N-1} x_{k} \Wal(j,\frac {k} {N}) \right) 
\left( \sum\limits_{k=0}^{N-1} x_{k} \Wal(j,\frac {k} N) \right) \\
& = \frac 1 {\prod_{i=1}^d N_i^2} \sum\limits_{k=0}^{N-1} ~ \sum\limits_{l=0}^{N-1} x_{k} x_{l} \Wal(j_1,\frac {k_1 \oplus l_1} {N_1})\cdot \ldots \cdot \Wal(j_d, \frac {k_d \oplus l_d} {N_d}).
\end{align}
Next, recalling \eqref{EqWalAddition} and Lemma \ref{LemmaDyadicAdd} we directly get the desired property by
\begin{align}
\sum\limits_{j=0}^{N-1} |X_{j}|^2 &
= \frac 1 {\prod_{i=1}^d N_i^2} \sum\limits_{j=0}^{N-1} ~\sum\limits_{k=0}^{N-1}~ \sum\limits_{l=0}^{N-1}  x_{k} x_{l} \Wal(j_1,\frac {k_1 \oplus l_1} {N_1}) \cdot \ldots \cdot \Wal(j_d, \frac {k_d \oplus l_d} {N_d}) \\
& = \frac 1 {\prod_{i=1}^d N_i^2} \sum\limits_{k=0}^{N-1}~ \sum\limits_{l=0}^{N-1}  x_{k} x_{l} \sum\limits_{j_1=0}^{N_1-1} \Wal(j_1,\frac{k_1 \oplus l_1} {N_1})\cdot \ldots \cdot \sum\limits_{j_d=0}^{N_d-1} \Wal(j_d, \frac {k_d \oplus l_d} {N_d}) \\
& = \frac 1 {\prod_{i=1}^d N_i} \sum\limits_{k=0}^{N-1} |x_{k}|^2 .
\end{align}
\end{proof}
	With this we can achieve a useful relation between the Walsh polynomial, i.e.  $\Phi(z) = \sum_{j=A}^{B} \alpha_{j} \Wal(j,z)$ with $A, B \in \mathbb{Z}^d$ and $\alpha_{j_i} \in \mathbb{R}$ for all $j_i=A_i, \ldots, B_i$, $i=1,\ldots,d$, and its coefficients $\alpha$ similar to the trigonometric polynomial in \cite{linearity}.
	
	\begin{lemma}\label{Lemma2DphiSum}
	Let $A,B \in \mathbb{Z}^d$ such that $A_i \leq B_i, i =1, \ldots,d$ and consider the Walsh polynomial $\Phi(z) = \sum_{j=A}^{B} \alpha_{j} \Wal(j,z)$ for $z \in \mathbb{R}^d_+$. If $L = \left\{L_1, \ldots, L_d \right\}$ with $L_i = 2^{n_i}$, $n_i \in \mathbb{N}, i = 1,\ldots,d$ such that $2L_i \geq B_i - A_i +1$, then
	\begin{equation}
	\sum\limits_{j=0}^{2L-1} \frac 1 {\prod_{i=1}^d 2 L_i} \left| \Phi(\frac j {2 L})\right|^2 = \sum\limits_{j=A}^{B} |\alpha_{j}|^2 .
	\end{equation}
	\end{lemma}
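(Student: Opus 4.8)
The plan is to reduce the statement to the one-dimensional Plancherel-type identity for the discrete Walsh transform, namely Lemma~\ref{PlancheralWalshLemma}, by first exhibiting $\Phi$ evaluated on the grid $\{j/(2L)\}$ as a discrete Walsh transform of a suitably chosen vector. First I would use the scaling property \eqref{Eq:scalingProperty} to rewrite, for each index $i$, the factor $\Wal(j_i, z_i/(2L_i))$ with $z_i \in \{0,\dots,2L_i-1\}$ as $\Wal(j_i/(2L_i), z_i)$ — but more to the point, I want $\Phi(j/(2L))$ expressed as $\sum_k c_k \Wal(k, j/(2L))$ for a single vector $c$ indexed by $k \in \{0,\dots,2L_i-1\}^d$, so that the left-hand side is literally $\prod 2L_i$ times $\sum_j |X_j|^2$ with $X$ the discrete Walsh transform of $c/\prod(2L_i)$ (up to the normalisation bookkeeping in the definition). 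The issue is that the summation range for $j$ in $\Phi$ is $A_i \le j_i \le B_i$, which need not start at $0$ and need not be contained in $\{0,\dots,2L_i-1\}$, so the identification is not immediate.

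The key step is therefore a \emph{folding} argument: since $2L_i \ge B_i - A_i + 1$, the residues of $A_i, A_i+1, \dots, B_i$ modulo $2L_i$ (in the dyadic sense, i.e. under $\oplus$ after reduction, or simply using that $\Wal(k+2^{n_i}m, x) = \Wal(k,x)$ for dyadic-rational $x$ with denominator dividing $2L_i$ via the scaling/periodicity properties) are all distinct. Concretely, for $z = j/(2L)$ with $j_i \in \{0,\dots,2L_i-1\}$ we have $\Wal(j_i, z_i) = \Wal(\lfloor j_i \rfloor_{2L_i}, z_i)$ in the appropriate sense, so I can collapse the sum $\sum_{j_i=A_i}^{B_i}$ onto the index set $\{0,\dots,2L_i-1\}$ without any coefficient colliding; define $c_k$ to be $\alpha_j$ when $k$ is the representative of $j$ and $0$ otherwise. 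Then $\Phi(j/(2L)) = \sum_{k=0}^{2L-1} c_k \Wal(k, j/(2L))$, i.e. $\Phi(j/(2L)) = \prod_i (2L_i) \cdot C_j$ where $C$ is the discrete inverse Walsh transform of $c$; equivalently $c$ is, up to normalisation, the forward discrete Walsh transform of $\{\Phi(j/(2L))\}_j$. Applying Lemma~\ref{PlancheralWalshLemma} to the vector $\{\Phi(j/(2L))\}_j$ then gives
\begin{equation}
\sum_{j=0}^{2L-1} \frac{1}{\prod_{i=1}^d 2L_i} \left|\Phi\Bigl(\frac{j}{2L}\Bigr)\right|^2 = \sum_{k=0}^{2L-1} |c_k|^2,
\end{equation}
and since the nonzero $c_k$ are exactly the $\alpha_j$, $A \le j \le B$, in bijection, the right-hand side equals $\sum_{j=A}^{B} |\alpha_j|^2$, which is the claim.

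The main obstacle I expect is the bookkeeping in the folding step: I must verify carefully that $\Wal(j_i, z_i)$ really only depends on $j_i$ modulo $2L_i$ when $z_i$ is a multiple of $1/(2L_i)$ in $[0,1)$, and that the condition $2L_i \ge B_i - A_i + 1$ guarantees the map $j_i \mapsto (j_i \bmod 2L_i)$ is injective on $\{A_i,\dots,B_i\}$ — this is where the sharp "$2L$" (rather than "$L$") enters, analogous to the classical fact that a trigonometric polynomial of degree $n$ is determined by $2n$ samples. The periodicity of $\Wal(\cdot, z_i)$ in its first argument with period $2L_i$ on this grid follows from the generalized Walsh function definition together with $\Wal(2^k s, x) = \Wal(s, 2^k x)$ and the period-$1$ continuation; once that is pinned down, the rest is the direct application of Lemma~\ref{PlancheralWalshLemma} and the tensor structure, both of which are routine.
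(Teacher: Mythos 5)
Your overall strategy is the paper's: realize the samples $\{\Phi(j/(2L))\}_{j}$ as (a multiple of) the discrete Walsh transform of a vector supported on $B-A+1\le 2L$ entries whose moduli are the $|\alpha_j|$, and then invoke Lemma~\ref{PlancheralWalshLemma}. The step where you diverge --- and where your argument has a genuine gap --- is the folding. You relocate the frequency window $\{A,\dots,B\}$ into $\{0,\dots,2L-1\}$ by asserting that $\Wal(k,\cdot)$ restricted to the grid $\{j/(2L)\}$ depends only on $k$ modulo $2L$. You flag this as the point to be pinned down, but the route you indicate for pinning it down does not close: writing $k+2Lm=k\oplus 2Lm$ (valid for $k<2L$) and using the multiplicative identity and the scaling property gives $\Wal(k+2Lm,\, j/(2L))=\Wal(k,j/(2L))\,\Wal(m,j)$, and with the paper's definition of the generalized Walsh functions one has $\Wal(m,j)=(-1)^{m_0 j_0}$ for non-negative integers $m,j$ --- a sign that depends on the \emph{sample point} $j$, not only on $k$ and $m$. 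A $j$-dependent sign cannot be absorbed into a fixed folded coefficient vector $c$, so $\Phi(j/(2L))$ is no longer exhibited as a single discrete Walsh transform and Lemma~\ref{PlancheralWalshLemma} does not apply. The difficulty is not cosmetic: the aliasing map on the grid $\{j/(2L)\}$ is reduction mod $2L$ for the Walsh--Paley ordering, but for the Walsh--Kaczmarz ordering adopted in the paper it is a permuted version of it, and the injectivity of that map on a window of $2L$ consecutive integers is precisely what would remain to be proved.

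The paper avoids the issue by never touching the frequency index modulo $2L$. It places the coefficients at positions $0,\dots,B-A$ of the length-$2L$ vector, writes the resulting decimal shift $k+A$ appearing in the \emph{second} argument of $\Wal$ as a dyadic shift $k\oplus\tilde A(k)$ (such a $\tilde A(k)$ always exists, entry by entry), and then uses the multiplicative identity \eqref{Eq:MultiplicativeID} in the second argument --- the one property that is robust here --- to peel off the unimodular factor $\Wal(j/(2L),\tilde A(k))$ into modified coefficients $\tilde\alpha_k$ with $|\tilde\alpha_k|=|\alpha_k|$. That is the device you should substitute for your folding step; the remainder of your reduction to Lemma~\ref{PlancheralWalshLemma} and the final bijection between the nonzero entries and the $\alpha_j$ then goes through essentially as you describe.
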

	
	\begin{proof}
	For the proof Lemma \ref{PlancheralWalshLemma} is used. Therefore, let $x = \left\{x_{k}\right\}$, where $x_{k_i} \in \mathbb{R}$, $k=\left\{k_i\right\}_{i=1,\ldots,d}, k_i=0,\ldots,2L_i-1 $ and $x \in \mathbb{R}^{2L_1 \times \ldots \times 2L_d}$ with the discrete Walsh transformed $X = \left\{X_{j}\right\}  \in \mathbb{R}^{2L_1 \times \ldots \times 2L_d}$, where $X_{j_i} \in \mathbb{R}$, $j = \left\{j_i\right\}_{i=1,\ldots,d}, j_i=0,\ldots,2L_i-1$ and $X \in \mathbb{R}^{2L_1 \times \ldots \times 2L_d}$. Consider the sums $k_i+A_i$, $i = 1, \ldots,d$, there exists a number $\tilde{A}_i(k)$, such that $k_i + A_i = k_i \oplus \tilde{A}_i(k_i)$ for all $i =1, \ldots,d$. As before, we denote by $\tilde{A}(k)$ the multi-index containing all $\tilde{A}_i(k_i)$. Define the coefficients 
	\begin{equation}
	\tilde{\alpha}_{k_i} = \frac {\alpha_{k_i}}{\Wal( \frac {j_i} {2L}, \tilde{A}_i(k_i))}
	\end{equation}
	and the sequence $x$ as follows:
	\begin{align}
	x_{k+L} = 
	\begin{cases}
	\tilde{\alpha}_{k+A+L} & -L_i \leq k_i \leq -L_i + B_i -A_i \\
	0 & \text{otherwise} .
	\end{cases} 
	\end{align}
	Then one gets with the scaling property \eqref{Eq:scalingProperty} and the multiplicative identity \eqref{Eq:MultiplicativeID}
	\begin{align}
	X_{j} 
	& =  \frac 1 {\prod_{i=1}^d 2 L_i} \sum\limits_{k=0}^{2L -1}x_{k} \Wal(j,\frac k {2L})  = \frac 1 {\prod_{i=1}^d 2 L_i} \sum\limits_{k=-L}^{L-1} x_{k+L} \Wal(j,\frac {k+L} {2L}) \\
	& = \frac 1 { \prod_{i=1}^d 2 L_i} \sum\limits_{k=A}^{B} \tilde{\alpha}_{k} \Wal(\frac j {2L},k+A) 
	= \frac 1 { \prod_{i=1}^d 2 L_i} \sum\limits_{k=A}^{B} \tilde{\alpha}_k \Wal(\frac j {2L},k \oplus \tilde{A}(k)) \\
	& = \frac 1 { \prod_{i=1}^d 2 L_i} \sum\limits_{k=A}^{B} \tilde{\alpha}_{k} \Wal(\frac j {2L}, \tilde{A}(k))\Wal(\frac j {2L},k)  = \frac 1 {\prod_{i=1}^d 2 L_i} \sum\limits_{k=A}^{B}  \alpha_{k} \Wal(k,\frac j {2L})  = \frac 1 {\prod_{i=1}^d 2 L_i} \Phi(\frac j {2L}) .
	\end{align}
	With that one can conclude
	\begin{align}
	\sum\limits_{j=0}^{2L -1} \frac 1 { \prod_{i=1}^d 2 L_i} \left| \Phi(\frac j {2L}) \right|^2 
	&= \sum\limits_{j=0}^{2L -1} \prod_{i=1}^d 2 L_i |X_{j}|^2  = \sum\limits_{j=0}^{2L -1} |x_{j}|^2 =  \sum\limits_{j=A}^{B} |\tilde{\alpha}_{j}|^2 =  \sum\limits_{j=A}^{B} |\alpha_{j}|^2 .
	\end{align}
	\end{proof}
	
		\subsection{Changes of Wavelets}\label{Ch:WaveletChanges}
	As mentioned in chapter \ref{Ch:Walsh} the decimal and dyadic addition do not correspond directly to each other. In particular, the representation of the decimal addition with a number $h$ to a number $x$ depends on both parts of the sum. However, in Corollary \ref{Cor:scalingProp} we have seen that for $x \in [0,1)$ and $n \in \mathbb{N}$ the dyadic and decimal addition coincide. In the proof of the main theorem we want to transfer the time shifts of the wavelet to the Walsh function, i.e.
	\begin{align}
	\int\limits_{2^{-R}(n-p+1)}^{2^{-R}(n+p)} 2^{R/2} \phi (2^R x - n)\Wal( k, x)dx 
	& = 2^{-R/2} \int\limits_{-p+1}^{p} \phi(x)\Wal( k, 2^{-R}(x +n))dx \\
	& \neq 2^{-R/2} \int\limits_{-p+1}^{p} \phi(x)\Wal( k, 2^{-R}(x \oplus n))dx.
	\end{align}
	Therefore, to enable us to use Corollary \ref{Cor:scalingProp} and make the last equation an equality, the domain of the wavelets needs to be restricted to $[0,1]^d$. This is not a contradiction to the construction of the previous chapter, because the functions $\phi_{R,n}$ are indeed supported in $[0,1]^d$. However, the scaling function at level $0$ is not only supported there and that is the function that we are dealing with after the change of variables in the integral. To solve this problem we represent the scaling function as a sum of functions that are supported in $[0,1]$, i.e.
	\begin{equation}\label{phii}
	\phi(x) = \sum\limits_{i=-p+2}^p \phi_i (x-i+1) \text{ with } \phi_i(x) = \phi(x+i-1) \mathcal{X}_{[0,1]}(x)
	\end{equation}
	and
	\begin{equation}
	\phi_{R,n} = 2^{R/2}\sum\limits_{i=-p+2}^p \phi_i(2^Rx-i+1-n) .
	\end{equation}
	This can also be done accordingly for the reflected function $\phi^\#$. In the higher dimensional case we have
	\begin{equation}\label{EqTwoDimScFn}
	\phi(x) = \phi_1(x_1) \otimes \ldots \otimes \phi_d(x_d) = \sum\limits_{i=-p+2}^p \phi_{i_1} (x_1-i_1+1) \cdot \ldots \cdot \phi_{i_d} (x_d-i_2+1)
	\end{equation}
	and $\phi_{i_k}$ defined as above.
	This way the multiplicative identity holds also for the decimal time shift of the wavelets.

	\subsection{Useful lemma about wavelets}

	For the proof of the main theorem we have to bound the decay rate of the Wavelets under the Walsh transform. We analyse the Haar wavelets and the boundary-corrected Daubechies wavelet separately. 
	
	For the Haar wavelet we have that the scaling function $\phi = \mathcal{X}_{[0,1]}$. Hence, 
	\begin{align}
		\reallywidehat[W]{\phi}(\frac j {2^R}+m)
		&= \int_0^1 \Wal(\frac j {2^R}+m, x)dx 
		= \int_0^1 \Wal(j + 2^R m , \frac x {2^R})dx \\
		&= \int_0^{2^{-R}} \Wal(j+2^Rm, x) 2^R dx 
		= \begin{cases}
		1 \quad m = 0, j <2^R \\
		0 \quad \text{else.}
		\end{cases}
	\end{align}
	Therefore we have
	\begin{equation}\label{Eq:DecayHaar}
		\reallywidehat[W]{\phi}(\frac j {2^R}+m) \leq \frac {1}{m} \quad \text{ for } j=0,\ldots,2^R-1, m \geq 1.
	\end{equation}
		Next, we use the result from \cite{Vegard} regarding the decay rate of H\"older continuous functions. We have that the $\phi_i$ and $\phi_i^\#$ have the same smoothness properties as the original function $\phi$. Hence, we have for the Daubechies wavelet of order $2$ that it is H\"older continuous with coefficient $\alpha = 0.55$. The higher order wavelets are all H\"older continuous with coefficient $\alpha = 1$. The results in \cite{Vegard} hold only for inputs in $\mathbb{N}$. We will use the same proof strategy and notation for inputs in the form of $\frac k {2^R}$ for some $k,R \in \mathbb{N}$.
	
	\begin{lemma}\label{Lemma:Decayrate}
		Let $\phi$ be a Daubechies wavelet of order $p >1$. And let $m \geq 1$,$R \in \mathbb{N}$, $L=2^R$ and $j=0,\ldots,L-1$. Then we have that
		\begin{equation}
			|\reallywidehat[W]{\phi_i}(\frac j L + m)| \leq  \frac C {m^\alpha}
		\end{equation}
		and 
		\begin{equation}
			|\reallywidehat[W]{\phi_i^\#}(\frac j L + m)| \leq  \frac {C^\#} {m^\alpha}
		\end{equation}
		for some constants $C$ and $C^\#$.
	\end{lemma}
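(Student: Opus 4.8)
The plan is to first eliminate the fractional Walsh frequency $\frac{j}{L}$, reducing the statement to integer arguments, and then to transcribe the H\"older--decay estimate of \cite{Vegard} at the resulting scale. Throughout one uses that $\phi_i$ and $\phi_i^\#$ are supported in $[0,1]$, so only the restriction of the Walsh functions to $[0,1)$ enters the Walsh transforms.

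For the reduction I would argue as follows. Since $\frac jL\in[0,1)$ and $m\in\mathbb N$, Lemma~\ref{Lem:Addxm} gives $\frac jL+m=\frac jL\oplus m$, so by symmetry and the multiplicative identity \eqref{Eq:MultiplicativeID},
\[
\Wal\Bigl(\tfrac jL+m,x\Bigr)=\Wal\Bigl(\tfrac jL,x\Bigr)\,\Wal(m,x)=\Wal\Bigl(\tfrac jL,x\Bigr)\,\wal(m;x),\qquad x\in[0,1).
\]
Next, the scaling property \eqref{Eq:scalingProperty} with exponent $R$, applied to $s=\frac jL=2^{-R}j$, yields $\Wal(j,x)=\Wal(\frac jL,2^Rx)$ and hence $\Wal(\frac jL,x)=\Wal(j,2^{-R}x)=\wal(j;2^{-R}x)$ for $x\in[0,1)$. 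Because $0\le j\le 2^R-1$, the function $\wal(j;\cdot)$ depends on its argument only through the first $n(j)\le R$ dyadic digits, so it is constant on $[0,2^{-R})$ with value $\wal(j;0)=1$; therefore $\Wal(\frac jL,x)=1$ for all $x\in[0,1)$, and consequently
\[
\reallywidehat[W]{\phi_i}\Bigl(\tfrac jL+m\Bigr)=\int_0^1\phi_i(x)\,\wal(m;x)\,dx=\reallywidehat[W]{\phi_i}(m),
\]
and the same identity holds for $\phi_i^\#$. It thus suffices to bound $|\reallywidehat[W]{\phi_i}(m)|$ and $|\reallywidehat[W]{\phi_i^\#}(m)|$ for integers $m\ge1$.

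For that bound one uses the H\"older--decay estimate of \cite{Vegard}, which I would reprove here in ordering-free form. First, $\phi_i$ restricted to $[0,1]$ equals $\phi(\cdot+i-1)$ restricted to $[0,1]$, hence is H\"older continuous on $[0,1]$ with the same exponent $\alpha$ and at most the same constant $K$ as $\phi$; similarly for $\phi_i^\#$, with a constant $K^\#$ inherited from the reflected scaling function $\phi^\#$. Fix $m\ge1$ and put $n=n(m)$, so $2^{n-1}\le m<2^n$. The Walsh function $\wal(m;\cdot)$ is orthogonal to every $\wal(k;\cdot)$ with $k<2^{n-1}$, and these $2^{n-1}$ functions form an orthonormal basis of the space $V$ of functions on $[0,1)$ constant on the dyadic intervals of length $2^{-(n-1)}$; thus $\wal(m;\cdot)\perp V$. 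Writing $Q$ for the orthogonal projection onto $V$ (averaging over each such interval), one has $\langle Q\phi_i,\wal(m;\cdot)\rangle=0$, so
\[
\bigl|\reallywidehat[W]{\phi_i}(m)\bigr|=\bigl|\langle\phi_i-Q\phi_i,\wal(m;\cdot)\rangle\bigr|\le\|\phi_i-Q\phi_i\|_{L^\infty([0,1])}\le K\,2^{-(n-1)\alpha},
\]
using $\|\wal(m;\cdot)\|_{L^1([0,1])}=1$ and that $|\phi_i(x)-(Q\phi_i)(x)|$ is at most the oscillation of $\phi_i$ over a dyadic interval of length $2^{-(n-1)}$. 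Since $m<2^n$ gives $2^{-n}<1/m$, the right-hand side is $\le 2^{\alpha}K\,m^{-\alpha}$, which is the claim with $C=2^{\alpha}K$, and, identically, $C^\#=2^{\alpha}K^\#$.

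The main obstacle is the first step: one has to be sure the fractional frequency $\frac jL$ genuinely disappears on $[0,1)$, which rests on $j\le2^R-1$ and on carefully handling the generalized Walsh function on non-integer arguments via the scaling and multiplicative identities and via the identification of decimal and dyadic addition of Lemma~\ref{Lem:Addxm} --- this is precisely what the restriction of the scaling functions to $[0,1]^d$ in Section~\ref{Ch:WaveletChanges} is for. The second step is routine once one observes that $\wal(m;\cdot)$ annihilates the coarse dyadic step functions; the only point to watch there is that $\phi_i,\phi_i^\#$, viewed on all of $\mathbb R$, need not be continuous across $0$ and $1$, but on $[0,1]$ they are restrictions of H\"older functions and $Q$ never leaves $[0,1]$, so the estimate applies verbatim.
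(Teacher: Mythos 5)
Your proof is correct, but it is organised quite differently from the paper's. The paper never eliminates the fractional frequency: it rewrites $\reallywidehat[W]{\phi_i}(\tfrac jL+m)=\int_0^1\phi_i(x)\Wal(j+Lm,\tfrac xL)\,dx$ via the scaling property, picks $q$ with $2^q\le m<2^{q+1}$ so that $2^{q+R}\le j+Lm<2^{q+R+1}$, and then runs the argument of \cite{Vegard} at this shifted scale: on each dyadic interval $\Delta_k^{q}$ the rescaled Walsh function takes the values $+1$ and $-1$ on the two halves, so the contribution of $\Delta_k^q$ is bounded by the H\"older oscillation $C2^{-(q+1)\alpha}$ times the length $2^{-q}$, and summing over the roughly $2^q$ intervals gives $2C/m^\alpha$. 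You instead first prove the identity $\reallywidehat[W]{\phi_i}(\tfrac jL+m)=\reallywidehat[W]{\phi_i}(m)$ --- which is correct, since $\phi_i$ is supported in $[0,1]$ and for $x\in[0,1)$ the generalized Walsh function $\Wal(s,x)$ depends only on the integer part of $s$; your derivation via Lemma \ref{Lem:Addxm}, symmetry, the multiplicative identity and the scaling property is legitimate, though note it uses multiplicativity in the \emph{frequency} argument, which you must (and do, implicitly) obtain by combining the stated symmetry with multiplicativity in the spatial argument --- and then bound the integer-frequency transform by observing that $\wal(m;\cdot)$ is orthogonal to the dyadic step functions at scale $2^{-(n-1)}$ and invoking the H\"older modulus of continuity. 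The two second halves are the same estimate in different clothing: your projection $Q$ onto coarse dyadic step functions encodes exactly the paper's pairing of adjacent $\pm1$ subintervals. What your route buys is that the $j$-independence of the bound becomes explicit and the integer-frequency result of \cite{Vegard} can be quoted verbatim instead of being re-derived for inputs of the form $\tfrac jL+m$; what it costs is the extra justification of the factorisation $\Wal(\tfrac jL\oplus m,x)=\Wal(\tfrac jL,x)\Wal(m,x)$ and of $\Wal(\tfrac jL,x)\equiv 1$ on $[0,1)$, both of which you supply. The resulting constants agree up to the harmless difference between your $2^\alpha K$ and the paper's $2C$.
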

	
	\begin{proof}
		First, observe that 
	\begin{align}
		\int_{0}^1 \phi_i(x) \Wal(\frac j L + m , x) dx & = \int_0^1 \phi_i(x) \Wal(j +Lm, \frac x L)dx .
		\end{align}
		We want to use the same technique as in \cite{Vegard} to divide the integral into parts where the Walsh function takes the values $+1$ and $-1$ on intervals of the same length. Let $q \in \mathbb{N}$ such that $2^q \leq m < 2^{q+1}$, then we have that $2^{q+R} \leq j +Lm < 2^{q+R+1}$. Now, define the intervals $\Delta_k^{q} = \left[ 2^{-q}k, 2^{-q}(k+1) \right)$. It was shown that the function $\Wal(j + Lm, s)$ is constant on the interval $\Delta_{2l}^{q+R+1}$ and $\Delta_{2l+1}^{q+R+1}$ and takes the values $+1$ on one of them and $-1$ on the other. Hence, $\Wal(j + Lm, \frac x L)$ takes these values on the intervals $\Delta_{2k}^{q+1}$ and $\Delta_{2k+1}^{q+1}$. Due to the H\"older continuity we have that there exists a constant $C$ such that $\phi_i(x) \leq \phi_i(s) + C|x-s|^\alpha$ for all $s \in [0,1]$. With this one gets 
		\begin{align}
			\sup_{x \in \Delta_{k}^{q}} \phi_i(x) & \leq \phi_i(2^{-q}k + 2^{-q-1}) + C2^{-(q+1)\alpha} \\
			\sup_{x \in \Delta_{k}^{q}} -\phi_i(x) & \leq - \phi_i(2^{-q}k + 2^{-q-1}) + C2^{-(q+1)\alpha}.
		\end{align}
		Hence,
		\begin{align}
			| \int_{\Delta_{k}^q} \phi_i(x) &\Wal(j+Lm,\frac x L)dx | 
			\\& \leq 2^{-q}|(\phi_i(2^{-q}k + 2^{-p-1})+ C2^{-(p+1)\alpha}) \ + (-\phi_i(2^{-q}k + 2^{-p-1})+ C2^{-(p+1)\alpha})| \\
			&\leq 2^{-q}C 2^{-q\alpha}.
		\end{align}
		Thus, we get for the complete integral on $[0,1]$
		\begin{align}
		\int_{0}^1 \phi_i(x) \Wal(\frac j L + m , x) dx & = \sum_{k = 0}^{2^{q}+1} \int_{\Delta_k^{q}} \phi(x) \Wal(j + Lm, \frac x L)dx \\
		& \leq \sum_{k = 0}^{2^{q}+1} 2^{-q} |2^{-q}|^\alpha  \leq C 2^{-q\alpha} \leq \frac {2C} {m^\alpha}.
	\end{align}
	The proof holds analogously for $\phi_i^\#$. 
	\end{proof}

	\subsection{Proof of the main theorem}
	With the tools established above, we can now prove the main result. To make the exposition easier to read we first prove the theorem in one dimension and then make the generalisation to several dimensions in a separate proof. Given the setup with the multi-indices framework, this can be done reasonably smoothly.

	\begin{proof}[Proof of Theorem \ref{Theo41CH2} in one dimension]
		The aim of this proof is to find for every $\theta \in (1,\infty)$ an integer $S_\theta \in \mathbb{N}$, such that for all $M \geq S_\theta N$ the subspace angle is bounded, i.e. $\mu(\mathcal{R}_N,\mathcal{S}_M) \leq \theta$. Let $R \in \mathbb{N}$ be the number of reconstructed levels, i.e. $N = 2^R$. We start with a suitable representation of $\cos(\omega(\mathcal{R}_N,\mathcal{S}_M)) $. There exists $\varphi \in \mathcal{R}_N$ with $||\varphi||=1$ such that $\inf_{f \in \mathcal{R}_N, ||f||=1} || P_{\mathcal{S}_M} f|| = || P_{\mathcal{S}_M} \varphi||$, because the closed unit ball in $\mathcal{R}_N$ is compact and $P_{\mathcal{S}_M}$ is continuous. By \eqref{Eq:repvarphi1d} we can represent $\varphi$ as
		\begin{equation}\label{Eq:proof1varphi}
		\varphi = \sum\limits_{l=0}^{2^R-p-1} \alpha_l \phi_{R,n} + \sum\limits_{l=2^R -p}^{2^R-1} \beta_l \phi_{R,n}^\# \text{ with } \sum\limits_{l=0}^{2^R-p-1} |\alpha_n|^2 + \sum\limits_{l = 2^R -p}^{2^R-1} |\beta_n|^2 = 1 
		\end{equation}
		and
		\begin{align}\label{EqCNM}
		\cos(\omega(\mathcal{R}_N,\mathcal{S}_M)) & = \inf_{f \in \mathcal{R}_N, ||f||=1} || P_{\mathcal{S}_M} f|| = || P_{\mathcal{S}_M} \varphi || \\ & = || \varphi - P_{\mathcal{S}_M}^\perp \varphi ||  \geq ||\varphi|| - || P_{\mathcal{S}_M}^\perp \varphi || = 1 - || P_{\mathcal{S}_M}^\perp \varphi || .
		\end{align}
		The first equation \eqref{Eq:proof1varphi} allows us to deal only with the scaling function instead of both the wavelets at different scales and the scaling function. The second one \eqref{EqCNM} enables us to bound $P_{\mathcal{S}_M}^\perp \varphi$ from above in lieu of $P_{\mathcal{S}_M} \varphi$ from below.
		
		Instead of dealing with all different shifts of the scaling function, we aim for a closed form that only depends on the functions $\phi$ and $\phi^\#$. An essential part in the construction of this is the use of the scaling property in Corollary \ref{Cor:scalingProp}. Therefore, it is necessary, that $m \in \mathbb{N}$ and $x \in [0,1)$. For this sake, the functions $\phi_i$ were defined in \eqref{phii} and we define 
		\begin{equation}\label{Eq:p_R}
			p_R: \mathbb{Z} \rightarrow \mathbb{N}
		\end{equation}
	 with $z  \mapsto p_R(z)$ and $p_R(z)$ being the smallest integer such that $p_R(z) 2^R + z >0$. This yields 
		\begin{align}\label{Eq:WalshShift1}
		\langle \phi_{R,n}  , \Wal(k, \cdot) \rangle 
		&= \sum\limits_{i=-p+2}^p \langle \phi_{i,R,n} , \Wal( k, \cdot) \rangle  \\
		& = 2^{R/2} \sum\limits_{i=-p+2}^p ~ \int\limits_{2^{-R}(n+i-1)}^{2^{-R}(n+i)} \phi_i (2^R x - n -i +1) \Wal( k, x)dx \\
		& = 2^{-R/2} \sum\limits_{i=-p+2}^p \int\limits_0^1 \phi_i (x)\Wal( k, 2^{-R}(x+n+i-1))dx \\
		& =  2^{-R/2} \sum\limits_{i=-p+2}^p \int\limits_0^1 \phi_i (x)\Wal( k, 2^{-R}(x+n+i-1 + 2^R p_R(i-1)))dx,
		\end{align}
		where we used in the last line the fact that the Walsh functions are $1$-periodic, if the other input data is an integer.
		Then, we have that $x \in [0,1]$ and $n+i-1 + 2^R p_R(n+i-1) \in \mathbb{N}$. Hence, Corollary \ref{Cor:scalingProp} can be used in the third line to get
		\begin{align}\label{Eq:WalshShift2}
		&\langle \phi_{R,n}  , \Wal(k, \cdot) \rangle \\
			& =  2^{-R/2} \sum\limits_{i=-p+2}^p \int\limits_0^1 \phi_i (x)\Wal( k, 2^{-R}(x +(n+i-1 + 2^R p_R(i-1))))dx \\
		& = 2^{-R/2} \sum\limits_{i=-p+2}^p \Wal(k, 2^{-R}(n+i-1 + 2^R p_R(i-1))) \int\limits_0^1 \phi_i(x)\Wal( k, 2^{-R}x)dx \\
		& = 2^{-R/2} \sum\limits_{i=-p+2}^p \Wal(n+i-1 + 2^R p_R(i-1), \frac{ k}{2^{R}})\reallywidehat[W]{\phi_i}(\frac{ k}{2^R}).
		\end{align}
		With 
		\begin{align}\label{Eq:WalshPolyProof}
		\Phi_i(z) = \sum\limits_{n=0}^{2^R-p-1} \alpha_n \Wal(n+i-1 + 2^R p_R(i-1),z)
		\end{align}
		it results in
		\begin{align}\label{EqTrick}
		\sum\limits_{n=0}^{2^R-p-1}  \alpha_n  \langle \phi_{i,R,n} , \Wal( k, \cdot) \rangle 
		& =  2^{-R/2} \sum\limits_{n=0}^{2^R-p-1} \alpha_n \Wal(n+i-1 +  2^R p_R(i-1),\frac{ k}{2^R})  \reallywidehat[W]{\phi_i}(\frac{ k}{2^R}) \\ &  =  2^{-R/2} \reallywidehat[W]{\phi_i}(\frac{ k}{2^R}) \Phi_i(\frac{ k}{2^R}).
		\end{align}
		Analogously this can be done for the reflected function $\phi^\#$. Thus, by using
		\begin{equation}
		\Phi_i^\# (z) = \sum\limits_{n = 2^R -p}^{2^R-1} \beta_n \Wal(n + i -1+ 2^R p_R(i -1),z)
		\end{equation} 
		similar to the above, we get
		\begin{align}\label{Eq:WalshHash}
		\sum\limits_{n = 2^R -p}^{2^R-1}  \beta_n  \langle \phi_{i,R,n}^\# , \Wal( k, x) \rangle  
		&=  2^{-R/2} \sum\limits_{n=2^R - p}^{2^R-1} \beta_n \Wal(n+i-1 +  2^R p_R(i-1),\frac{ k}{2^R}) \reallywidehat[W]{\phi_i^\#}(\frac{ k}{2^R}) \\
		& =  2^{-R/2} \reallywidehat[W]{\phi_i^\#}(\frac{ k}{2^R}) \Phi_i^\#(\frac{ k}{2^R}).
		\end{align}
		This representation is very useful. Indeed, one only has to pay attention to the decay rate of the Walsh transform of the pieces of the mother scaling function, its reflection and the Walsh polynomial. Moreover, the pieces fulfil $f(t) = 0$ for $t <0$, such that the Walsh transform with one kernel can be used for the analysis. Consider
		\begin{align}\label{Eq:PlugInCutOf1D}
		||P_{\mathcal{S}_M}^\perp \varphi|| 
		& = || P_{\mathcal{S}_M}^\perp (\sum\limits_{n=0}^{2^R-p-1} \alpha_n \phi_{R,n} + \sum\limits_{n=2^R-p}^{2^R-1} \beta_n \phi_{R,n}^\#)|| \\
		& = || P_{\mathcal{S}_M}^\perp (\sum\limits_{n=0}^{2^R-p-1} \alpha_n \sum\limits_{i=-p+2}^p \phi_{i,R,n} + \sum\limits_{n=2^R-p}^{2^R-1} \beta_n \sum\limits_{i=-p+2}^p \phi_{i,R,n}^\# )|| .
		\end{align}
		Next, one uses the linearity of the orthogonal projection to change the order of the summands, such that the sum over the scaling function pieces can be dealt with in the end.
		First, we take out the sum over the parts of the wavelet $\phi_{i,R,n}$ to handle every cut-out of the wavelet separately. In particular, by \eqref{Eq:PlugInCutOf1D}
		\begin{align}\label{Eq:OrderCutOut}
		||P_{\mathcal{S}_M}^\perp \varphi|| & = || \sum\limits_{i=-p+2}^p P_{\mathcal{S}_M}^\perp (\sum\limits_{n=0}^{2^R-p-1} \alpha_n \phi_{i,R,n} + \sum\limits_{n=2^R-p}^{2^R-1} \beta_n \phi_{i,R,n}^\# ) || \\
		 & \leq \sum\limits_{i=-p+2}^p || P_{\mathcal{S}_M}^\perp
		(\sum\limits_{n=0}^{2^R-p-1} \alpha_n \phi_{i,R,n} + \sum\limits_{n=2^R-p}^{2^R-1} \beta_n \phi_{i,R,n}^\#) || \\
		& =  \sum\limits_{i=-p+2}^p \sqrt{\sum\limits_{k > M} \left|\sum\limits_{n=0}^{2^R-p-1} \alpha_n \langle \phi_{i,R,n}, \Wal(k, \cdot)\rangle + \sum\limits_{n=2^R-p}^{2^R-1} \beta_n \langle \phi_{i,R,n}^\# , \Wal(k,\cdot) \rangle \right|^2}.
		\end{align}
	Second, it follows, by using \eqref{EqTrick}, \eqref{Eq:WalshHash} and the Cauchy-Schwarz inequality, that
		\begin{align}\label{Eq:AfterCauchy1d}
		||P_{\mathcal{S}_M}^\perp \varphi|| 
		& \leq \sum\limits_{i=-p+2}^p \sqrt{\sum\limits_{k > M} 2^{-R} \left| \reallywidehat[W]{\phi_i}(\frac k {2^R}) \Phi_i (\frac k {2^R}) + \reallywidehat[W]{\phi^\#_i}(\frac k {2^R}) \Phi_i^\# (\frac k {2^R}) \right|^2} \\
		& \leq  \sum\limits_{i=-p+2}^p \left( \sum\limits_{k \geq M} 2^{-R} \left| \reallywidehat[W]{\phi_i}(\frac k {2^R}) \Phi_i (\frac k {2^R}) \right|^2 + \sum\limits_{k \geq M} 2^{-R} \left|\reallywidehat[W]{\phi^\#_i}(\frac k {2^R}) \Phi_i^\# (\frac k {2^R}) \right|^2 \right. \\
		& \left. + 2 \left( \sum\limits_{k \geq M} 2^{-R} \left| \reallywidehat[W]{\phi_i}(\frac k {2^R}) \Phi_i (\frac k {2^R}) \right|^2 \right)^{1/2} \left( \sum\limits_{k \geq M} 2^{-R} \left| \reallywidehat[W]{\phi^\#_i}(\frac k {2^R}) \Phi_i^\# (\frac k {2^R}) \right|^2 \right)^{1/2} \right)^{1/2}.
		\end{align}
		We will only deal with the first summand and the other follow analogously. In the following step the $1$- periodicity of the Walsh function is used. Let $S \in \mathbb{R}_+$ such that $M = S 2^R$, then by replacing $k\geq M$ with $k = mL + j$ with $L =  2^R$, $m\geq S$ and $j = 0,\ldots,L-1$ we have
		\begin{align}\label{Eq:k=mL+j}
		\sum\limits_{k \geq M} 2^{-R} \left| \reallywidehat[W]{\phi_i}(\frac k {2^R}) \Phi_i (\frac k {2^R}) \right|^2 
		& \leq   \sum\limits_{j=0}^{L-1} \frac{1}{L } \left| \Phi_i(\frac{ j}{L}) \right|^2 \sum\limits_{m \geq S} \left|\reallywidehat[W]{\phi_i}\left(\frac j L + m\right)\right|^2 .
		\end{align}
		The last sum can be estimated via Lemma \ref{Lemma:Decayrate} and \eqref{Eq:DecayHaar} by
		\begin{align}\label{Eq:EstimateSum2}
		\sum\limits_{m \geq S} \left|\reallywidehat[W]{\phi_i}\left(\frac j L + m\right)\right|^2 
		& \leq \sum\limits_{m \geq S} \frac {A^2}{ m^{2\alpha}}  \leq \frac{ A^2}{ S^{2\alpha -1}}
		\end{align}
		with $\alpha =1$ for the Haar wavelet and $\alpha$ being the H\"older coefficient for the other Daubechies wavelets of order $p >1$. For the first sum $\sum\limits_{j=0}^{L-1} \frac{1}{L } \left| \Phi_i(\frac{ j}{L}) \right|^2$ we have with $\alpha_n = 0$ for $n = -p+1, \ldots, -1$ that
		\begin{align}\label{Eq:PhiRep}
		\Phi_i(z) & = \sum\limits_{n=0}^{2^R-p} \alpha_n \Wal(n + i -1 +  2^R p_R(i-1),z) \\ 
		& = \sum\limits_{n=-p+1}^{2^R-p} \alpha_l \Wal(n + i -1 +  2^R p_R(i-1),z) \\
		& = \sum\limits_{n = -p+1 + i -1 +  2^R p_R(i-1)}^{2^R - p + i - 1 + 2^R p_R(i-1)} \alpha_{n -i +1 -2^R p_R(i-1)} \Wal(n,z) 
		\end{align}
		and $L = 2^R$, such that we can use Lemma \ref{Lemma2DphiSum} as $2^R -p + i -1 + 2^R p_R(i-1) - (-p +1 + i -1 +  2^R p_R(i-1)) + 1 = 2^R -p -( -p +1) +1 = 2^R \geq L$ and obtain
		\begin{equation}\label{Eq:BoundPhi}
		\sum\limits_{j=0}^{L-1} \frac 1 L \left| \Phi_i (\frac j L) \right|^2 = \sum\limits_{l =  -p +1 + i -1 +  2^R p_R(i-1)}^{ 2^R -p + i - 1 +  2^R p_R(i-1)} |\alpha_{l -i +1 -2^R p_R(i-1)}|^2 = \sum\limits_{n =  -p +1}^{ 2^R -p } |\alpha_{n }|^2 \leq 1.
		\end{equation} 
		Altogether this gives with Lemma \ref{Lemma:Decayrate}
		\begin{equation}\label{Eq:Boundphi}
		\sum\limits_{k \geq M} 2^{-R} \left| \reallywidehat[W]{\phi_i}(\frac k {2^R}) \Phi_i (\frac k {2^R}) \right|^2  \leq  \frac{A^2}{S^{2\alpha -1}},
		\end{equation}
		and similarly
		\begin{equation}\label{Eq:Boundphi1}
		\sum\limits_{k \geq M} 2^{-R} \left| \reallywidehat[W]{\phi_i^\#}(\frac k {2^R}) \Phi_i^\# (\frac k {2^R}) \right|^2  \leq \frac{{A^\#}^2}{S^{2\alpha -1}}.
		\end{equation}
		Using \eqref{Eq:BoundPhi}, \eqref{Eq:Boundphi} and \eqref{Eq:Boundphi1} yields the following estimation
		\begin{align}\label{Eq:Finalbound}
		||P_{\mathcal{S}_M}^\perp \varphi|| 
		& \leq \sum\limits_{i=-p+2}^{p} \left( \frac {A^2} {S^{2\alpha -1}} + \frac{{A^\#}^2}{S^{2\alpha -1}} + 2 \frac {A A^\# }{S^{2\alpha -1}} \right)^{1/2} \\
		& \leq (2p-2) \left( \frac{4 \max\left\{A^2, {A^\#}^2\right\}}{S^{2\alpha -1}} \right) ^{1/2} = (2p-2) \left( \frac {C^2} {S^{2\alpha -1}} \right)^{1/2}.
		\end{align}
		Thus, $||P_{\mathcal{S}_M}^\perp \varphi|| \leq \gamma$ whenever
		\begin{equation}\label{EqS}
		S \geq   \left(\frac{C (2p-2)}{\gamma }\right)^{2/2\alpha-1},
		\end{equation}
		where $C = 4 \max\left\{A^2, {A^\#}^2\right\}$.
		It follows from \eqref{EqCNM} that 
		$
		\cos(\omega(\mathcal{R}_N,\mathcal{S}_M)) \geq 1 - \gamma \geq  \frac 1 {\theta}
		$ 
		,i.e. $\mu(\mathcal{R}_N,\mathcal{S}_M) \leq \theta$, whenever the constant $S$, which is dependent on $\theta$ and therefore denoted by $S_\theta$, fulfils \eqref{EqS} with $\gamma =  1 - \frac 1 \theta$, i.e 
		\begin{equation}\label{Eq:Stheta}
		S_\theta \geq  \left(\frac{C (2p-2) \theta}{\theta -1 }\right)^{2/2\alpha -1}
		\end{equation} and $M = S_\theta L = S_\theta N$.
	\end{proof}

\begin{proof}[Proof of Theorem \ref{Theo41CH2} in $d$-dimensions]
	In higher dimensions we represent $\varphi \in \mathcal{R}_N$ in terms of the sum over $2^d$ different tensor products. Then, we need to investigate the inner products of these summands with the Walsh functions as in the one dimensional case. At this point the results from the one dimensional case come into play. Next, one investigates the parts of the set $I_M^\perp$, where $I_M = \left\{ l = \left\{l_1,\ldots,l_d\right\}, l_k = 0,\ldots,M_k-1 \right\}$ and $M = \left\{M_1,\ldots,M_d\right\}\in \mathbb{N}^d$ and $I_M^\perp = \mathbb N^d \setminus I_M$, which correspond to the largest estimates of the inner products of the summands of the wavelet and the Walsh function. Finally, these can be bounded with estimates from the one dimensional case and additional care for the finite sums.
	
	Now, we present the described steps in more detail.	Let $\varphi \in \mathcal{R}_N$ with $|| \varphi || =1$. Then we can represent $\varphi$ as in \eqref{Eq:R_Ndd} in the following sum
	\begin{equation}
	\varphi = \sum\limits_{s \in \left\{ 0,1 \right\}^d} \sum\limits_{n \in K_s} \alpha_n \bigotimes \phi_{R,n}^s .
	\end{equation}
	In the one dimensional case we derived the representation of 
	$\sum_{n=0}^{2^R-p-1}  \alpha_n  \langle \phi_{i,R,n} , \Wal( k, \cdot) \rangle $ in terms of the Walsh transform of the wavelet and the Walsh polynomial, i.e. $2^{-R/2} \reallywidehat[W]{\phi_i}(\frac{ k}{2^R}) \Phi_i(\frac{ k}{2^R})$. This equality from \eqref{EqTrick} is used to represent the inner product in higher dimensions. For this we need to define $p_R$ from \eqref{Eq:p_R} for higher dimensions. In particular, let $p_R : \mathbb{Z}^d \rightarrow \mathbb{N}^d$ with $\left\{z_i\right\}_{i=1,\ldots,d}= z \mapsto p_R(z) = \left\{p_R(z)_i\right\}_{i=1,\ldots,d}$ and $p_R(z)_i$ being the smallest integer such that $p_R(z)_i 2^R - z_i >0$ for all $i=1, \ldots,d$. Further, let $l =(l_1,\ldots,l_d) \in \mathbb{Z}^d$. This yields 
	\begin{align}
	\langle \phi_{i,R,n}^s, \Wal(l,\cdot) \rangle 
	& = \prod_{k=1}^d \langle \phi_{i,R,n_k}^{s_k}, \Wal(l_k, \cdot) \rangle. 
	\end{align}
	Here, the problem is reduced to the one dimensional case and we can apply \eqref{Eq:WalshShift1} and \eqref{Eq:WalshShift2} to get
	\begin{align}
	\langle \phi_{i,R,n}^s, \Wal(l,\cdot) \rangle & =\prod_{k=1}^d  2^{-dR/2} \Wal(n_k + i_k -1 + 2^R p_R(i_k -1),\frac {l_k}{2^R}) \reallywidehat[W]{\phi_{i_k}^{s_k}}(\frac{l_k}{2^R}) \\
	& =  \Wal(n + i -1 + 2^R p_R(i -1),\frac {l}{2^R}) \reallywidehat[W]{\phi_i^s}(\frac l {2^R}) .
	\end{align}
	This is defined now as in \eqref{Eq:WalshPolyProof}
	\begin{align}
	\Phi_i^s (z) & 
	= \sum\limits_{n \in K_s} \alpha_{n} \Wal(n+ i -1 + 2^R p_R(i -1),\frac {z}{2^R}).
	\end{align}
	Note that the different definitions from the one dimensional case for $\Phi$ and $\Phi^\#$ are combined in the notation with the $K_s$. We get with this the presentation of the inner products as desired: 
	\begin{align}
	\sum\limits_{n \in K_s} \alpha_{n} \langle \phi_{i,R,n}^s, \Wal(l,\cdot) \rangle 
	&  =  2^{-dR/2} \Phi_i^s (\frac l {2^R}) \widehat{\phi_i^s}(\frac l {2^R}). 
	\end{align}
	For the representation of indices that correspond to the sampling functions, let $I_M = \{ l = \{l_1,\ldots,l_d \}, l_k = 0,\ldots,M_k-1 \}$, where $M = \{M_1,\ldots,M_k \}\in \mathbb{N}^d$ is the number of samples. Then $l \notin I_M$ corresponds to $l > m$ in the one dimensional case. We now want to analyse the orthogonal projection on the orthogonal complement of the sampling space
	\begin{align}
	||P_{\mathcal{S}_M}^\perp \varphi|| 
	& = || P_{\mathcal{S}_M}^\perp (\sum\limits_{s \in \left\{ 0,1 \right\}^d} \sum\limits_{n \in K_s} \alpha_{n} \phi_{R,n}^s) || 
	 = || P_{\mathcal{S}_M}^\perp (\sum\limits_{s \in \left\{ 0,1 \right\}^d} \sum\limits_{n \in K_s} \alpha_{n} \sum\limits_{i = -p+2}^p \phi_{i,R,n}^s) || .
	 \end{align}
	 This way we ensured the use of Corollary \ref{Cor:scalingProp}. Next we change the order again to deal with the different cut-out functions separately. This was seen already in \eqref{Eq:OrderCutOut}. We get
	 \begin{align}
	||P_{\mathcal{S}_M}^\perp \varphi|| & = || \sum\limits_{i = -p+2}^p P_{\mathcal{S}_M}^\perp (\sum\limits_{s \in \left\{ 0,1 \right\}^d} \sum\limits_{n \in K_s} \alpha_{n} \sum\limits_{i = -p+2}^p \phi_{i,R,n}^s ) || 
	 \leq \sum\limits_{i = -p+2}^p || P_{\mathcal{S}_M}^\perp (\sum\limits_{s \in \left\{ 0,1 \right\}^d} \sum\limits_{n \in K_s} \alpha_{n} \sum\limits_{i = -p+2}^p \phi_{i,R,n}^s ) ||.
	 \end{align}
	 With the Cauchy Schwarz inequality and a careful reordering we get as in \eqref{Eq:AfterCauchy1d}
	 \begin{align}
		||P_{\mathcal{S}_M}^\perp \varphi|| & =  \sum\limits_{i= -p+2}^p \sqrt{\sum\limits_{l \notin I_\mathcal{M}} \left| \sum\limits_{s \in \left\{ 0,1 \right\}^d} \sum\limits_{n \in K_s} \alpha_{n} \langle \phi_{i,R,n}^s,\Wal(l,\cdot) \rangle \right|^2} 
	 =  \sum\limits_{i = -p+2}^p \sqrt{\sum\limits_{l \notin I_\mathcal{M}}2^{-dR} \left| \sum\limits_{s \in \left\{ 0,1 \right\}^d} \Phi_i^s (\frac l {2^R}) \reallywidehat[W]{\phi_i^s}(\frac l {2^R})\right|^2} \\
	 & \leq \sum\limits_{i = -p+2}^p \sum\limits_{s \in \left\{0,1 \right\}^d} \left( \sum\limits_{l \notin I_\mathcal{M}} 2^{-dR} | \Phi_i^s (\frac l {2^R}) \reallywidehat[W]{\phi_i^s}(\frac l {2^R}) |^2 \right)^{1/2}  .
	\end{align}
	
	Now, let $S \in \mathbb{N}$ be given, such that the number of samples $M = \left\{M_1,\ldots,M_k\right\} \in \mathbb{N}^d$ is $M_k= S2^R$. Then if $l = \left\{l_1,\ldots,l_d\right\} \notin I_\mathcal M$ at least one $l_k > M_k$. The sum is the largest if only one $l_k$ fulfils this estimate. Hence, without loss of generality let $l_1 > M_1$ and $l_k \leq M_k$ for $k = 2, \ldots, d$. Now let $l_k = j_k + u_k 2^R$ and $\alpha=0.55$ or $\alpha =1$ depending on the chosen wavelet. Then we get similar to \eqref{Eq:EstimateSum2}:
	\begin{align}
	\sum\limits_{l_1 > M_1} \sum\limits_{l_2 \leq M_2} \ldots \sum\limits_{l_d \leq M_d}  2^{-dJ} \left|  \Phi_i^s (\frac l {2^R}) \reallywidehat[W]{\phi_i^s} (\frac l {2^R}) \right|^2
	& = \sum\limits_{j = 0}^{2^R-1} \frac 1 {2^{dR}} \left| \Phi_i^s (\frac j {2^R}) \right|^2 \sum\limits_{u_1 > S} \sum\limits_{u_2 \leq S}\ldots\sum\limits_{u_d \leq S} \left| \phi_i^s(\frac j {2^R} + u) \right|^2 \\
	& \leq \sum\limits_{j = 0}^{2^R-1} \frac 1 {2^{dR}} \left| \Phi_i^s (\frac j {2^R}) \right|^2 \sum\limits_{u_1 > S} \sum\limits_{u_2 \leq S}\ldots\sum\limits_{u_d \leq S} \frac{A_1^2}{(1+u_1)^{2\alpha}}\ldots \frac{A_d^2}{(1+u_d)^{2\alpha}} \\
	& \leq  \frac {C^{d-1}} {S^{2\alpha -1}}  \sum\limits_{j = 0}^{2^R-1} \frac 1 {2^{dR}} \left| \Phi_i^s (\frac j {2^R}) \right|^2 .
	\end{align}
	The last sum can be estimated with the help of Lemma \ref{Lemma2DphiSum}. In \eqref{Eq:PhiRep} and \eqref{Eq:BoundPhi} this was derived in the one dimensional case that can be directly used here, such that
	\begin{align}
	\sum\limits_{j = 0}^{2^R-1} \frac 1 {2^{dR}} \left| \Phi_i^s (\frac j {2^R}) \right|^2 =  \sum\limits_{n \in K_s}  |\alpha_{n}|^2.
	\end{align} 
	From the fact that the $\phi_{R,n}^s$ form an orthonormal bases and $||\varphi|| = 1$ we have
	\begin{equation}
	\sum\limits_{n \in K_s}|\alpha_n|^2 \leq 1 .
	\end{equation}
	This together with the fact that $|\left\{0,1\right\}^d| = 2^d$ gives
	\begin{equation}
\sum\limits_{s \in \left\{0,1 \right\}^d} \left( \sum\limits_{l_1 > M_1} \sum\limits_{l_2 \leq M_2} \ldots \sum\limits_{l_d \leq M_d} 2^{-dR} \left|  \Phi_i (\frac l {2^R}) \widehat{\phi_i} (\frac l {2^R}) \right|^2 \right)^{1/2}
	\leq 2^d \left( \frac {C^{d-1} } {S^{2\alpha -1}} \right)^{1/2}.
	\end{equation}
		By replacing $C^{d-1}$ with $C$ we have
	\begin{equation}
	|| P_{\mathcal{S}_M}^\perp \varphi || \leq \sum\limits_{i = -p+2}^p 2^d \left( \frac {C} {  S} \right)^{1/2} = (2p-2) 2^d \left( \frac {C} {  S^{2\alpha -1}} \right)^{1/2}.
	\end{equation}
	Thus, $||P_{\mathcal{S}_M}^\perp \varphi || \leq \gamma$ whenever
	\begin{equation}\label{EqS2D}
	S \geq C \left( \frac {(2p-2)2^d}{\gamma} \right)^{2/2\alpha -1}  .
	\end{equation}
	It follows from \eqref{EqCNM} that 
	\begin{equation}
		\cos(\omega(\mathcal{R}_N,\mathcal{S}_M)) \geq 1 - \gamma \geq \frac{1}{\theta},
	\end{equation}
	i.e. $\mu(\mathcal{R}_N,\mathcal{S}_M) \leq \theta$, whenever $S_\theta$ fulfils \eqref{EqS2D} with $\gamma = 1 - \frac 1 \theta$, i.e.
	\begin{equation}
	S_\theta \geq  C \left( \frac {(2p-2) 2^d \theta }{\theta -1} \right)^{2/2\alpha -1}   
	\end{equation}
	and $|M|= M_1 \cdot \ldots \cdot M_d = S_\theta^d 2^{dR} = S_\theta^d N$.
\end{proof}

\section{Numerical Experiments}

In this chapter we underline the theoretical results with numerical experiments. We first calculate the stable sampling rate for different stabilities $\theta$ and Daubechies wavelets. Then, we see that the reconstruction with generalized sampling leads to much better results then the direct inversion with the Walsh transform. Moreover, we point out that it is important to consider the stable sampling rate, as otherwise the reconstruction gets very unstable with meaningless results. 

First, we see in Figure \ref{fig:StableSamplingRate} the stable sampling rate for the different wavelets and stabilities in the one-dimensional case. One can see that it is indeed linear with jumps according to the levels of the wavelets. Moreover, it is easy to detect that the constant $S_\theta$ is considerably low such that the number of samples needed is only marginally larger than the number of coefficients that we reconstruct. It is not surprising that the stable sampling rate gets larger for smaller $\theta$. In the theory of the reconstruction from Fourier measurements we have a direct relation between the smoothness of the wavelets and the size of the stable sampling rate. Similar relations are not known for the Walsh wavelet case.

\begin{figure}[ht]
	\centering
	\subfloat[$\Theta(N;5)$ for DB2 with $\frac {N_{max}}{M_{max}} = 1.249$]{
		\includegraphics[width=0.45\textwidth]{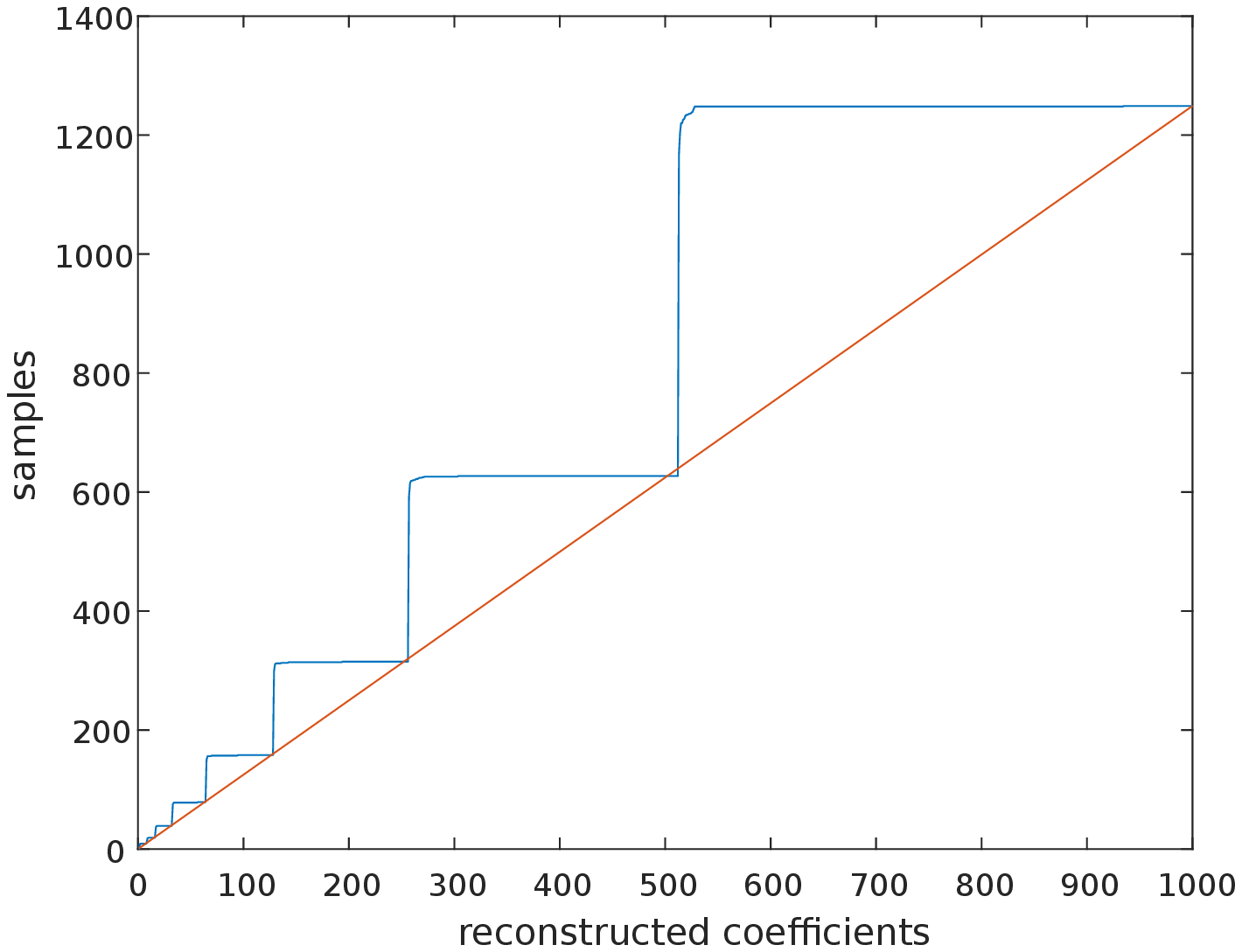}
		\label{fig:DB2Theta5}}
	\quad
	\subfloat[$\Theta(N;2)$ for DB2 with $\frac {N_{max}}{M_{max}} = 1.49$]{
		\includegraphics[width=0.45\textwidth]{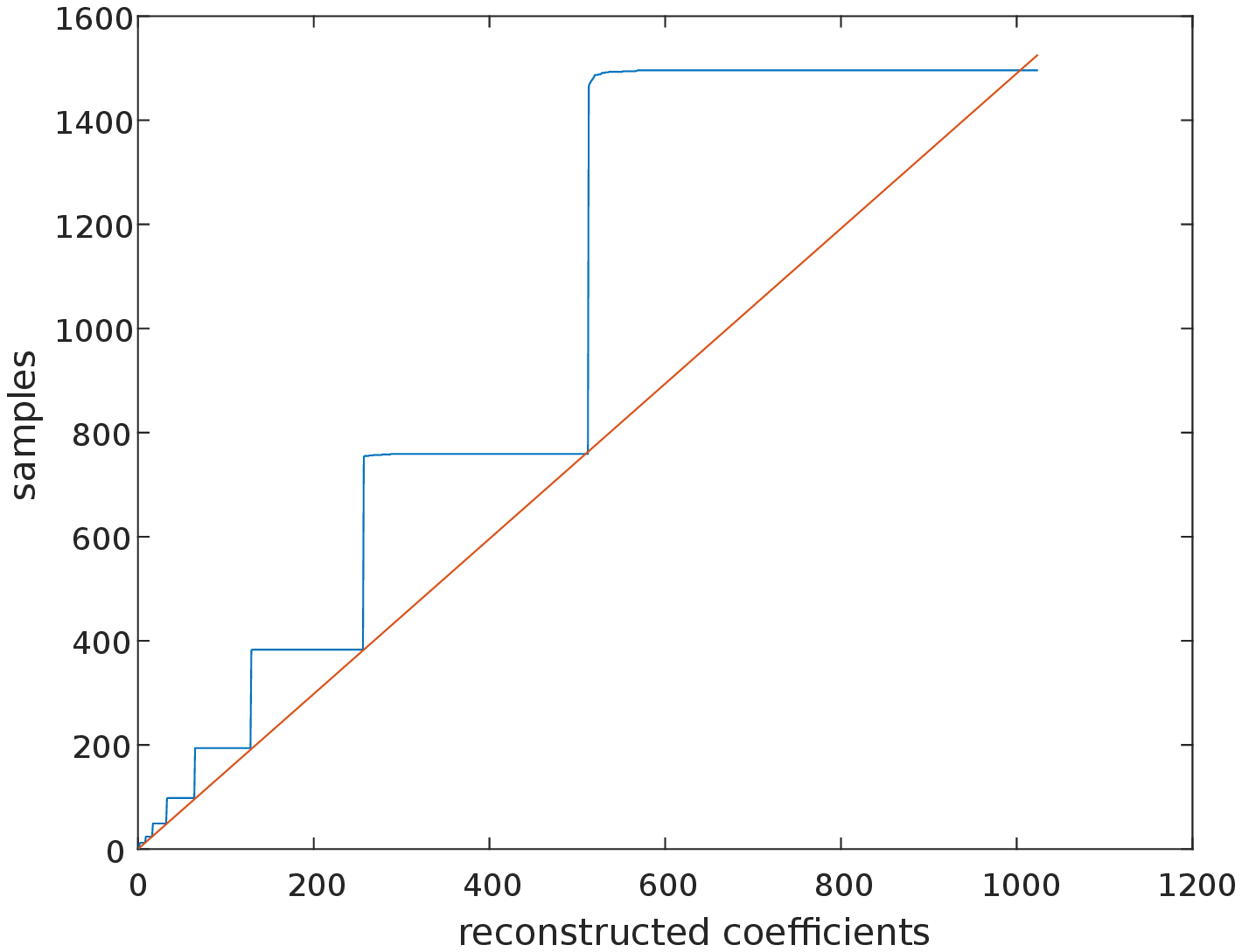}
		\label{fig:DB2Theta2}}
	
	\subfloat[$\Theta(N;5)$ for DB8 with $\frac {N_{max}}{M_{max}} = 1.257$]{
		\includegraphics[width=0.45\textwidth]{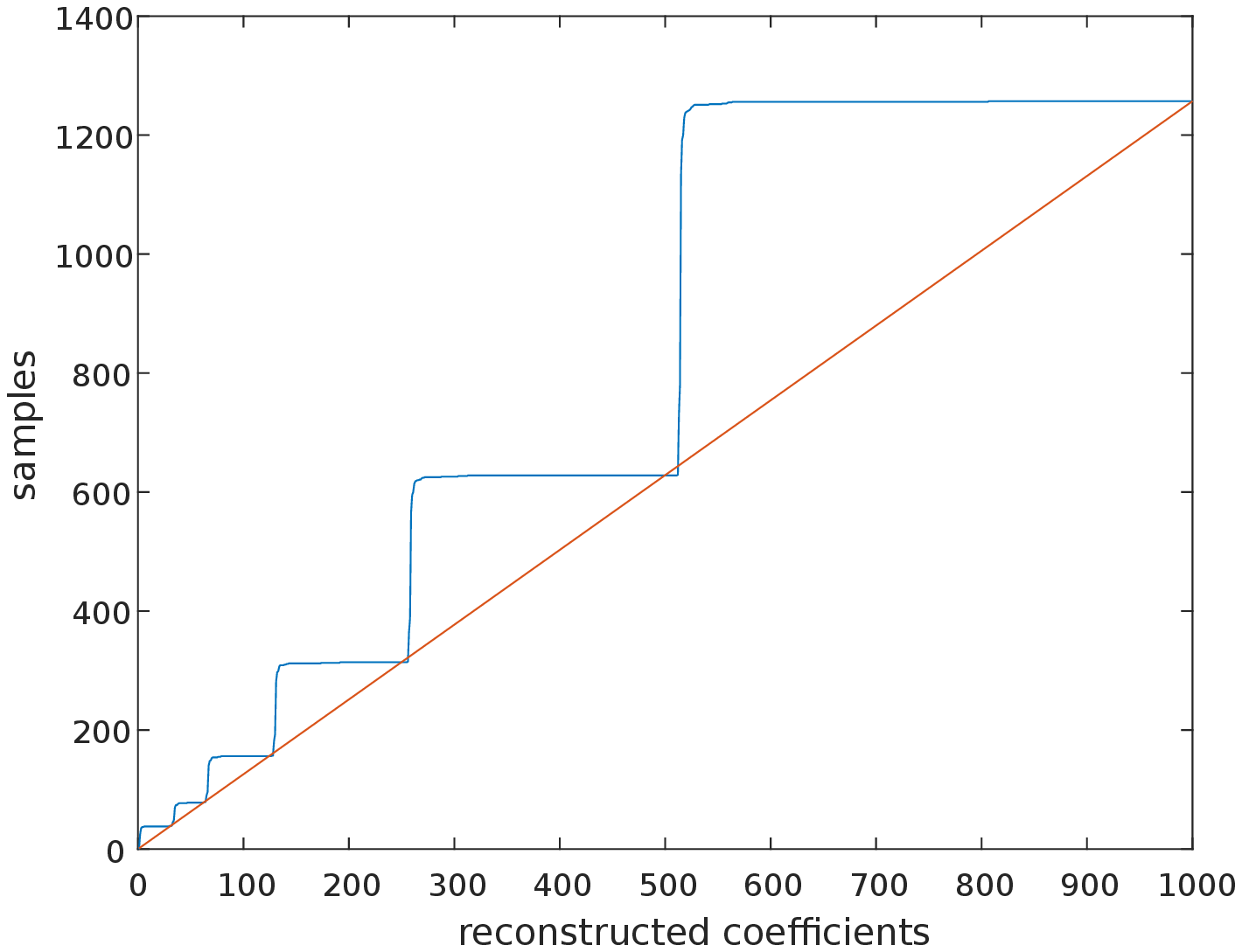}
		\label{fig:DB8Theta5}}
	\quad
	\subfloat[$\Theta(N;2)$ for DB8 with $\frac {N_{max}}{M_{max}} = 2.003$]{
		\includegraphics[width=0.45\textwidth]{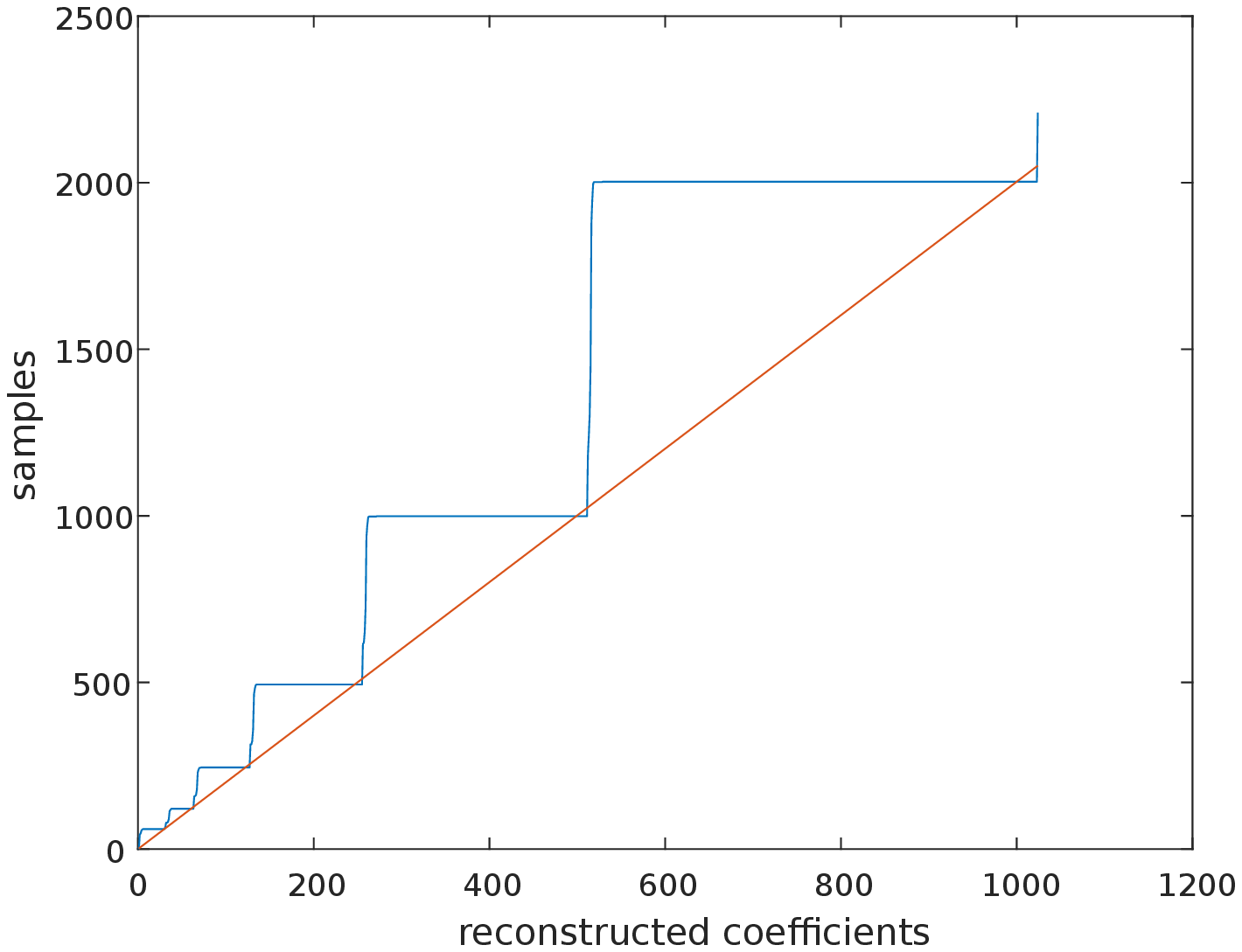}
		\label{fig:DB8Theta2}}
	\caption{Plots of the stable sampling rate (blue) for Daubechies Wavelet of order $2$ and $8$ for a threshold $\theta = 2$ and $5$ and the linear line with $N_{max}/M_{max}$ (orange). }
	\label{fig:StableSamplingRate}
\end{figure}

\begin{figure}[!htbp]
	\centering
	\subfloat[Original function $1$]{
		\includegraphics[width=0.45\textwidth]{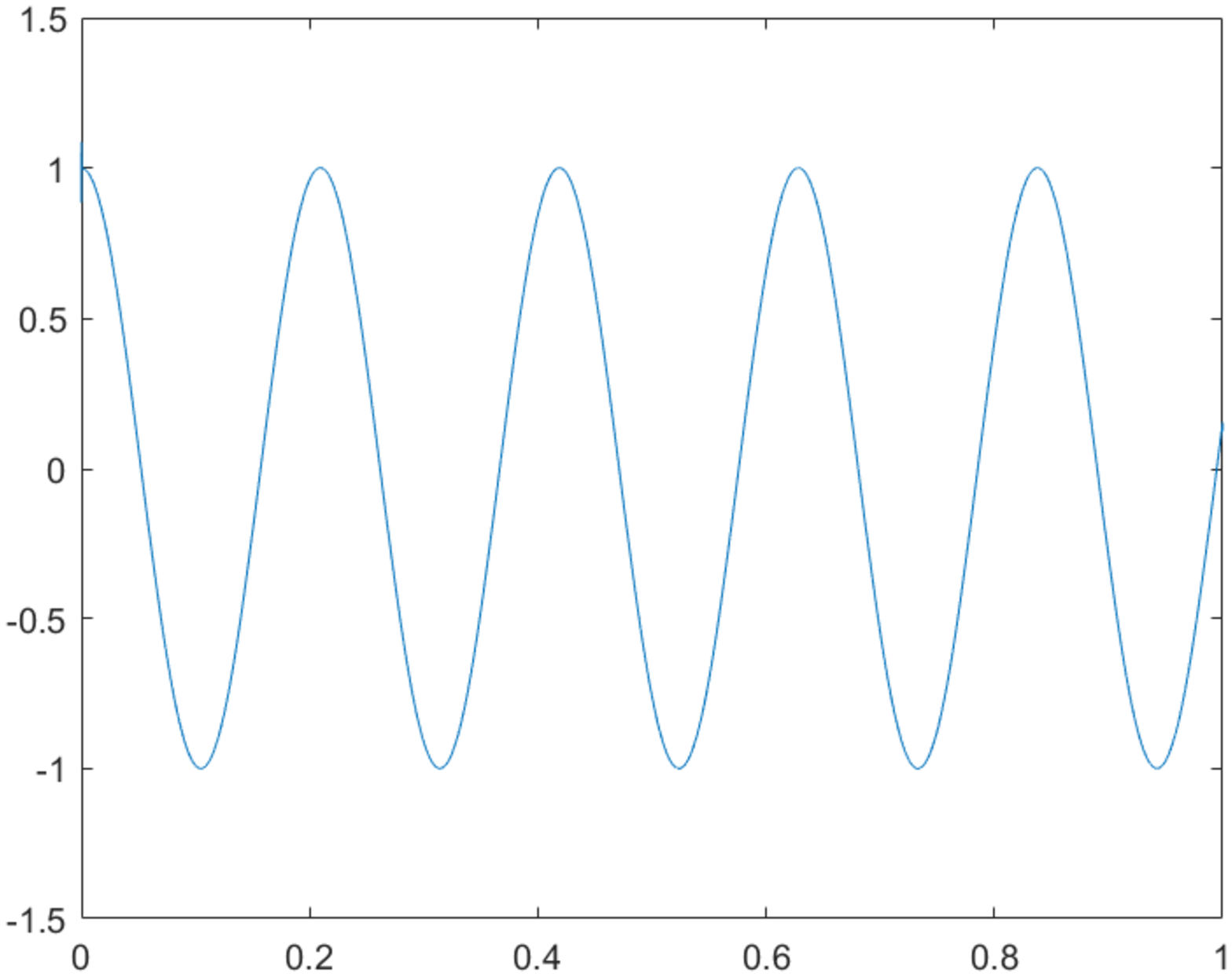}
		\label{fig:Orig}}
	\quad
	\subfloat[Reconstruction with Generalized Sampling of function $1$ with $64$ Wavelet coefficients from $77$ measurements]{
		\includegraphics[width=0.45\textwidth]{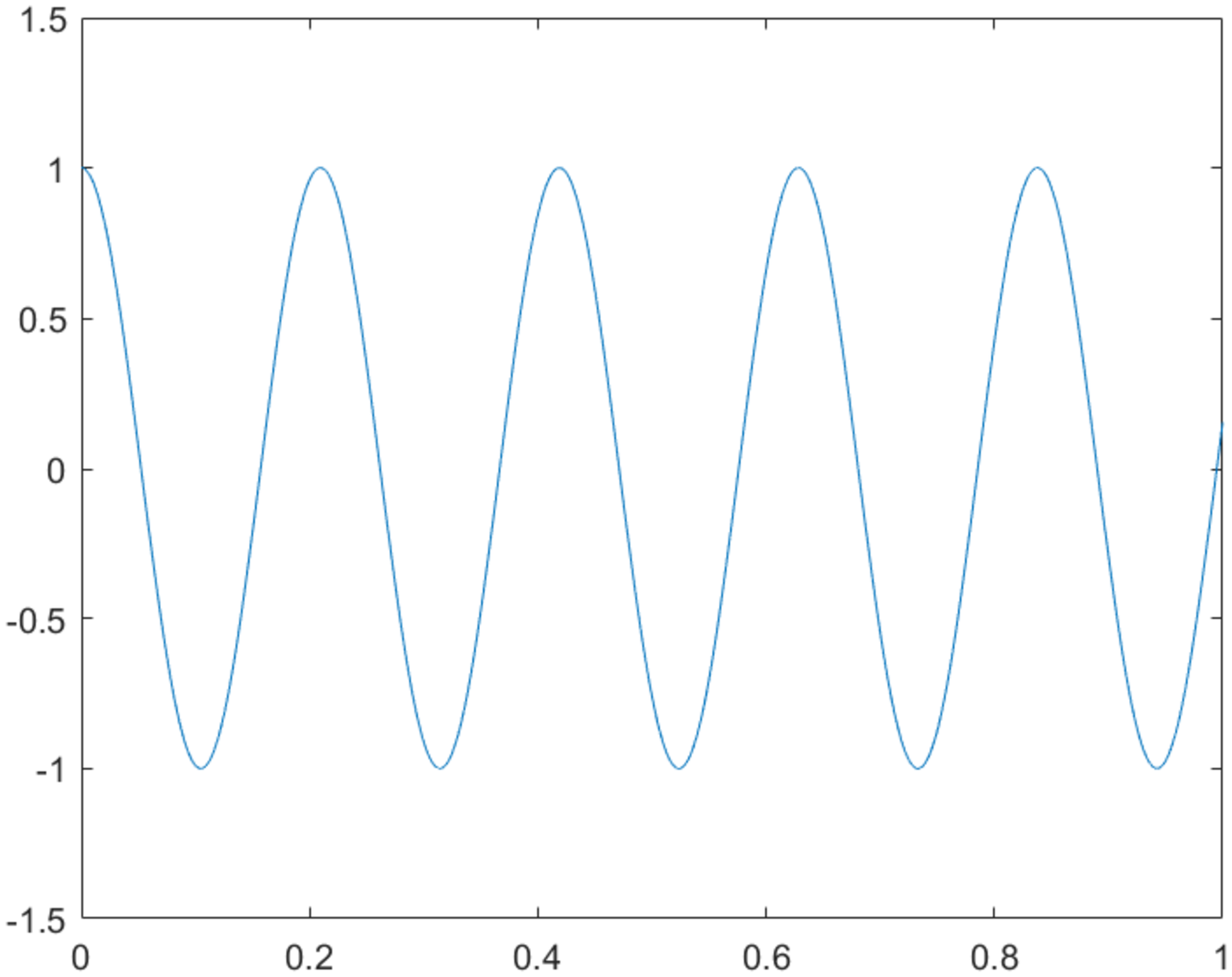}
		\label{fig:GS}}
	\quad
	\subfloat[Truncated Walsh series of function $1$ from $77$ measurements]{
		\includegraphics[width=0.45\textwidth]{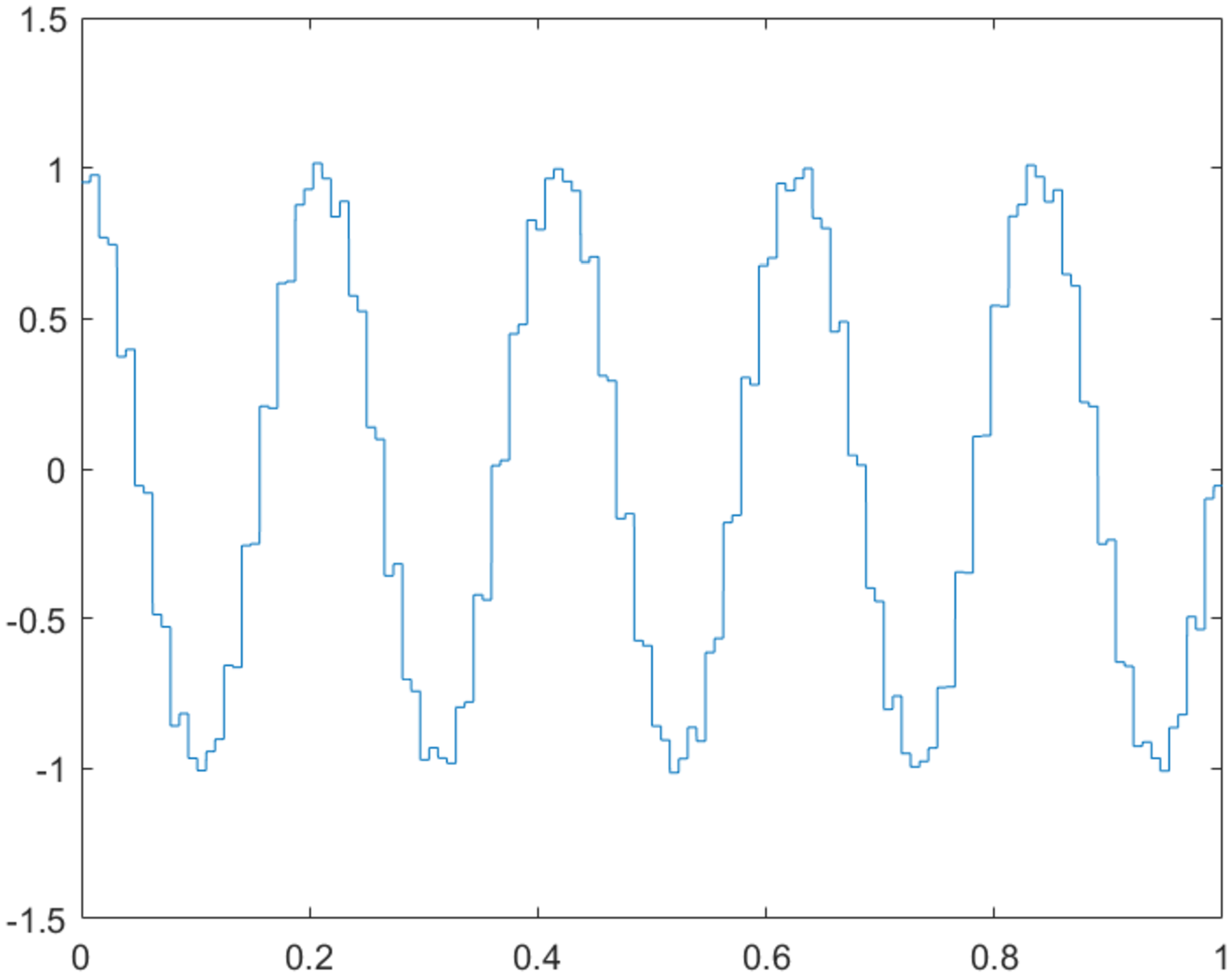}
		\label{fig:TW}}
	\quad	
		\subfloat[Original function $2$]{
			\includegraphics[width=0.45\textwidth]{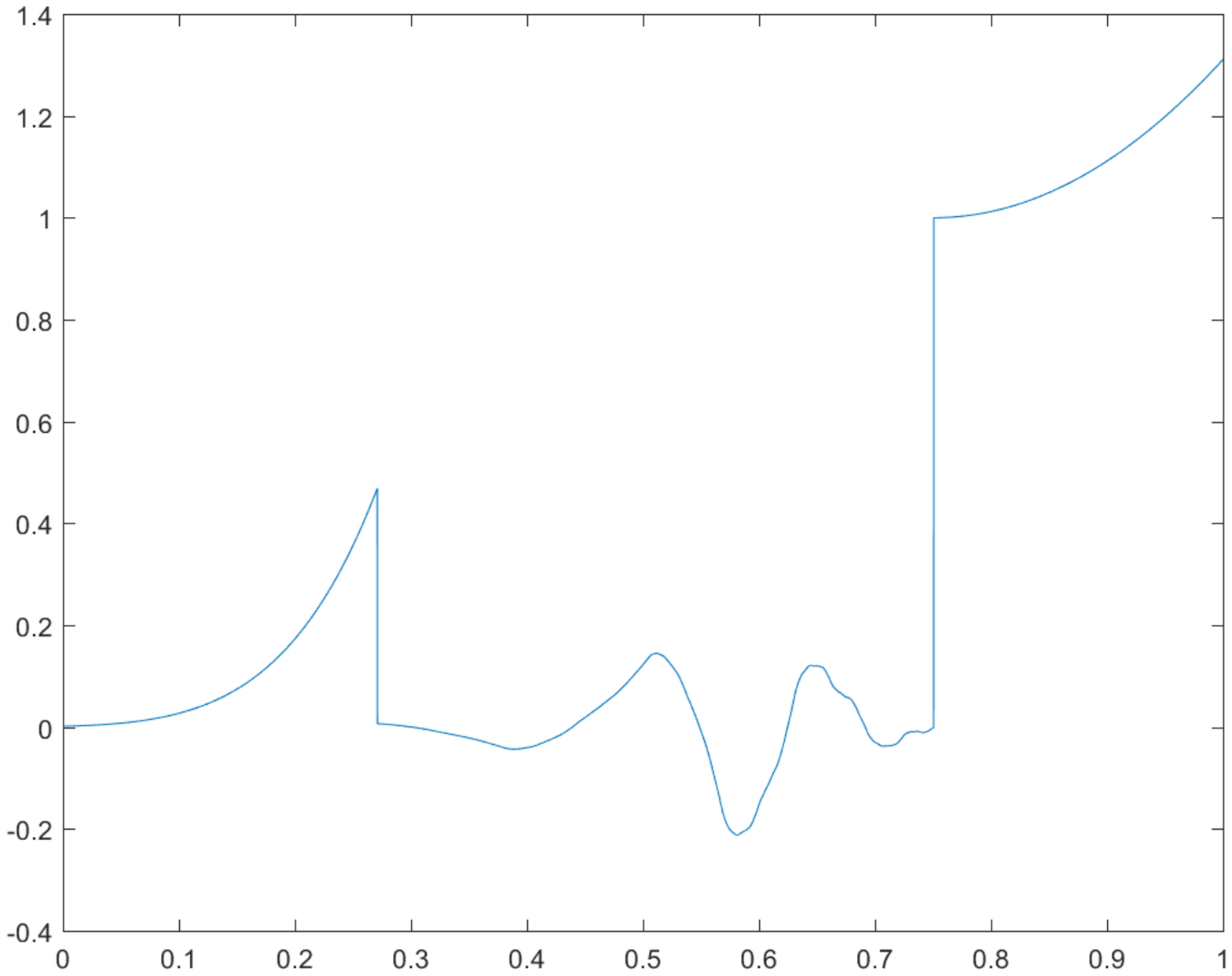}
			\label{fig:Orig2}}
		\quad
		\subfloat[Reconstruction with Generalized Sampling of function $2$ with $128$ Wavelet coefficients and $192$ measurements]{
			\includegraphics[width=0.45\textwidth]{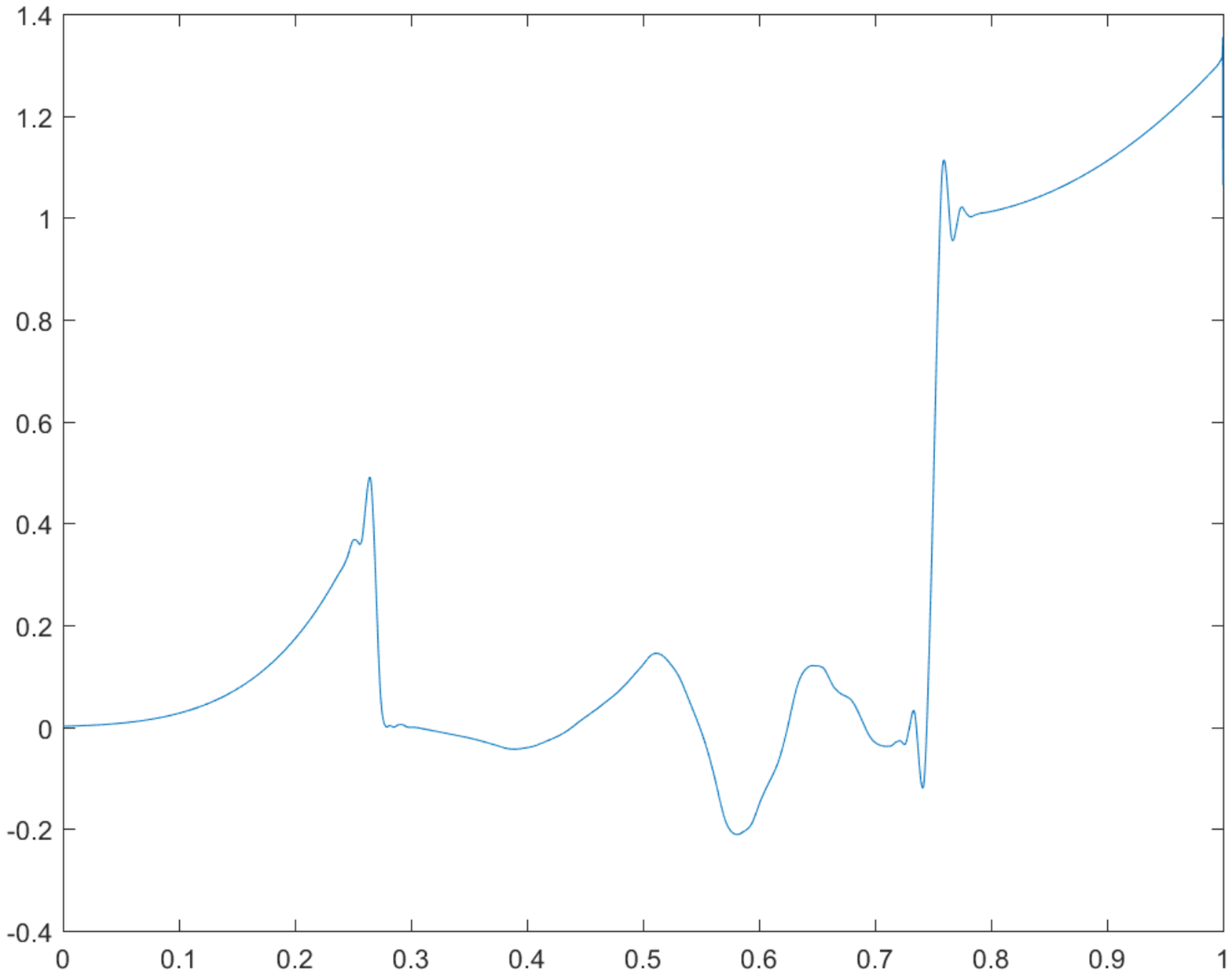}
			\label{fig:GS2}}
		\quad
		\subfloat[Truncated Walsh series from $192$ measurements]{
			\includegraphics[width=0.45\textwidth]{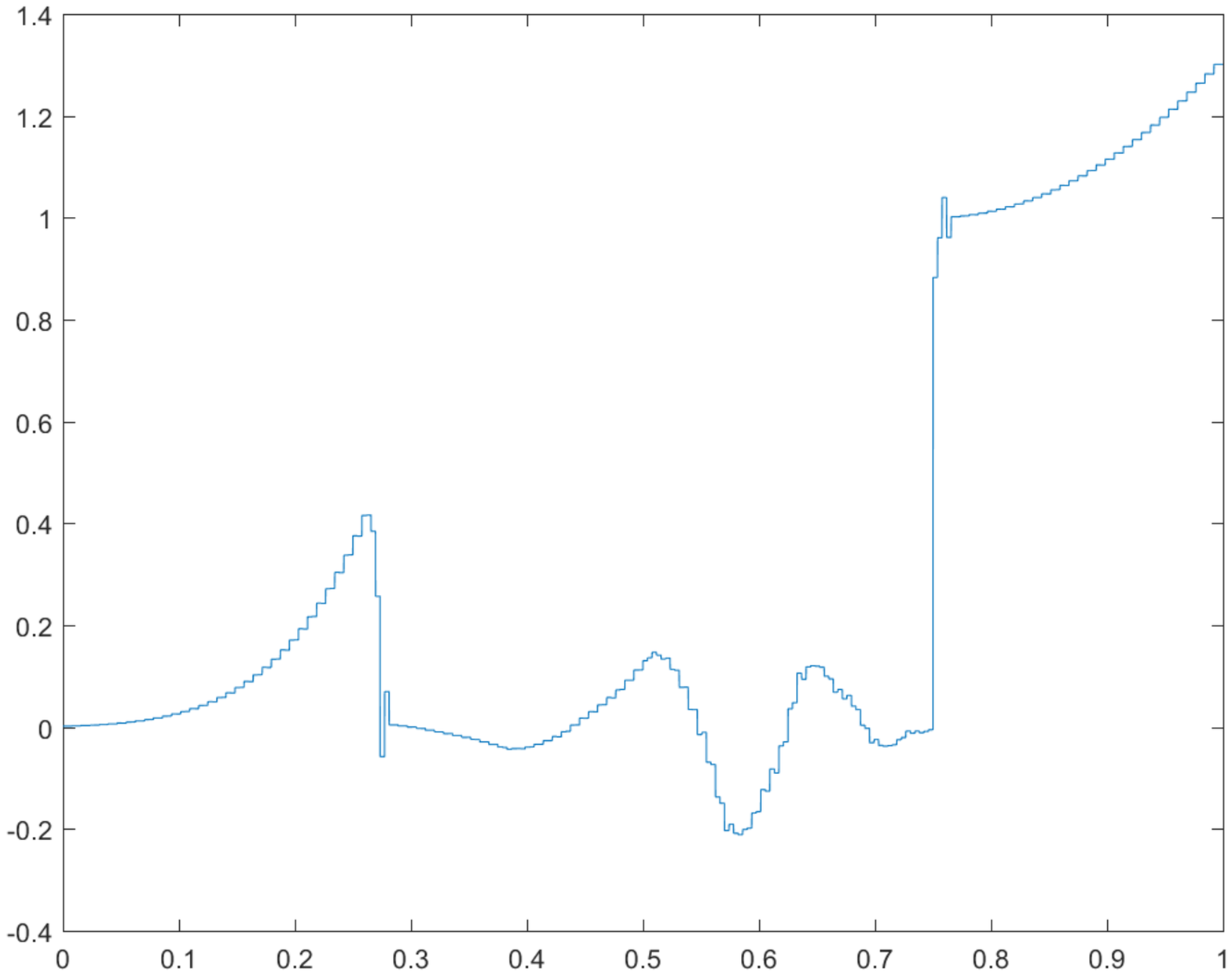}
			\label{fig:TW2}}
		\quad
		
	\caption{Reconstruction with Generalized Sampling and Daubechies $8$ Wavelets and the inverse Walsh.}
	\label{fig:GSvsTW}
\end{figure}

\begin{figure}
	\includegraphics[width=0.45\textwidth]{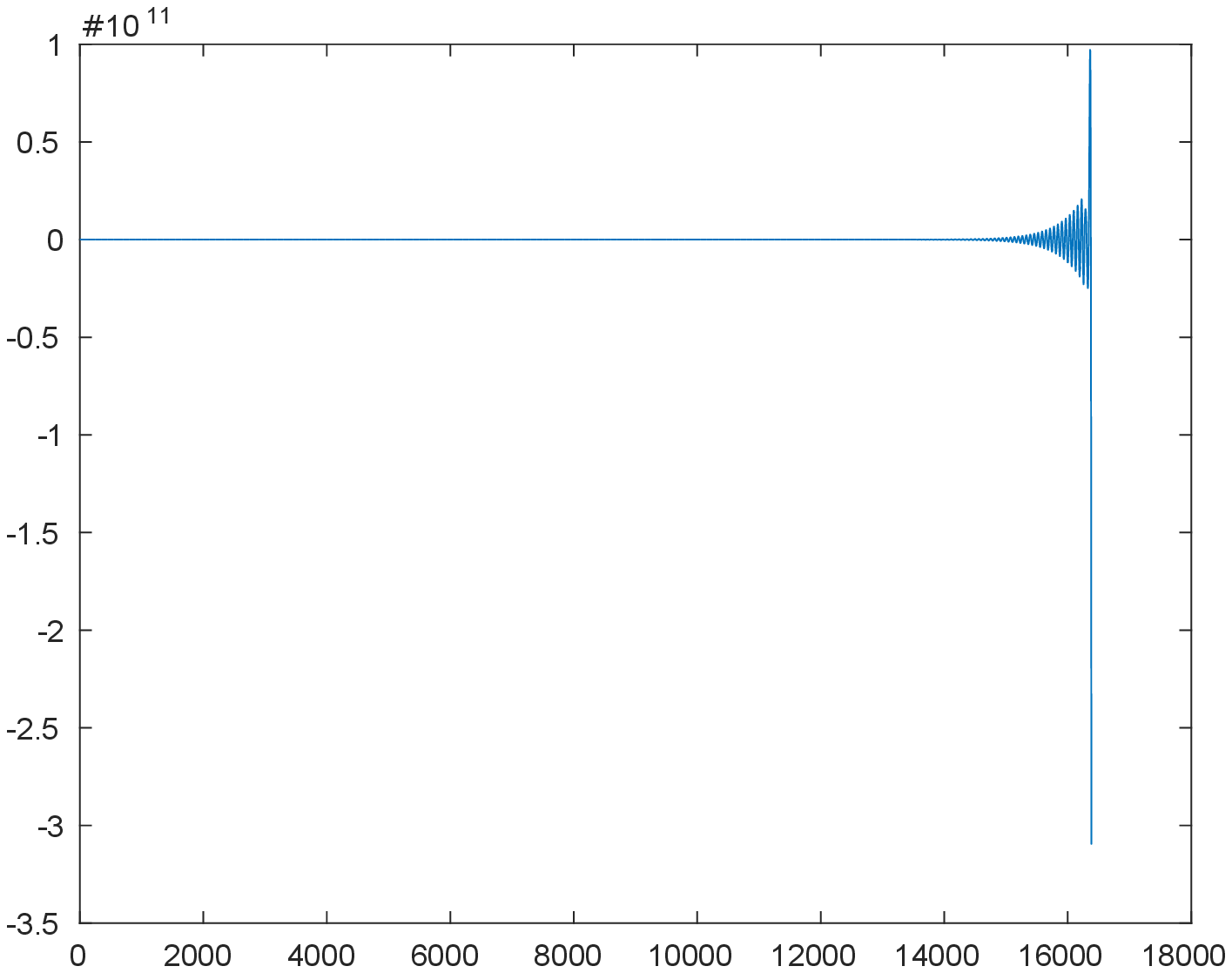}

	\caption{Reconstruction with Generalized Sampling below the Stable Sampling Rate with $512$ Walsh samples and Daubechies $8$ Wavelet coefficients}
		\label{fig:GSunderSR}
\end{figure}

In Figure \ref{fig:GSvsTW}, we demonstrate the reconstruction with generalized sampling. For this sake we consider two different functions on $[0,1]$. First, we look at the cosine function in Figure \ref{fig:Orig}, taking $77$ Walsh samples. In Figure \ref{fig:TW} the direct inversion is shown. It is clear that the reconstruction has a lot of block artefacts, whereas the reconstruction with generalized sampling of $64$ Daubechies $8$ wavelets has nearly no visible artefacts. The same artefacts can be seen in \ref{fig:TW2}. In this case $192$ Walsh samples were taken and $128$ wavelet coefficients were reconstructed. The artefacts with the direct Walsh inverse are much stronger than the common Gibbs phenomena for the Fourier case. Because of this, reconstructions with Walsh functions are not feasible in practice. They are also the reason why one does not use Haar wavelets as they obey the same block artefacts. This underlines the need of a reconstruction technique that refers the data from the sampling space to a much more appropriate reconstruction space, where the data is represented sparsely. In this case we get fewer artefacts. For completely continuous functions as in \ref{fig:GS} the reconstruction with generalized sampling has nearly no artefacts. In case of discontinuities as in the second function \ref{fig:Orig2} one gets some artefacts at the discontinuities as can be seen in \ref{fig:GS2}. Even so, the overall reconstruction quality is much better and the reconstruction still obeys the regularity properties of each part of the function. 

Nevertheless, it is important to take the stable sampling rate in mind. If one tries to reconstruct with fewer samples then needed, the reconstruction gets very unstable and one gets meaningless results. This can be seen in Figure \ref{fig:GSunderSR}, where a function on $[0,1]$ is reconstructed from $512$, which is much more than the $77$ and $192$ for the other functions.

\section{Conclusion}

We were able to investigate a very important part of the error estimate for different reconstruction methods. Moreover, we showed that binary measurements modelled by Walsh functions are well suited to reconstruct images with wavelets. This, together with the results in \cite{2DCase,linearity}, gives a broad knowledge about the accuracy and stability for two major applications of sampling theory, i.e. systems with Fourier samples and those with binary measurements.

\section*{Acknowledgement}

LT acknowledges support by the UK Engineering and Physical Sciences Research Council (EPSRC) grant EP/L016516/1 for the University of Cambridge Centre for Doctoral Training, the Cambridge Centre for Analysis. AH acknowledges support from a Royal Society University Research Fellowship.

\bibliographystyle{abbrv}

\end{document}